\numberwithin{equation}{section}
\DeclareMathOperator{\sech}{sech}
\newcommand{\abs}[1]{\vert #1 \vert}
\newcommand{\Bigabs}[1]{\Bigl\vert #1 \Bigr\vert}
\newcommand{\norm}[1]{\left\Vert #1 \right\Vert}
\newcommand{\N}{\mathbb{N}}
\newcommand{\Z}{\mathbb{Z}}
\newcommand{\R}{\mathbb{R}}
\newcommand{\angles}[1]{\langle #1 \rangle}
\DeclareMathOperator{\sgn}{sgn}
\DeclareMathOperator{\supp}{supp}
\newtheorem{theorem}{Theorem}
\newtheorem{lemma}{Lemma}
\newtheorem{corollary}{Corollary}
\theoremstyle{definition}
\theoremstyle{remark}
\newtheorem{remark}{Remark}
\title[Dispersive estimates for 
 water wave equations]{Dispersive estimates for  linearized
 water wave type equations
	in $\R^d$}
  \author [T. Deneke] { Tilahun Deneke }
\author[T. T. Dufera] {Tamirat T. Dufera}
  \author[A. Tesfahun]{Achenef Tesfahun}
\address{Department of Mathematics \\
Nazarbayev University \\
Qabanbai Batyr Avenue 53 \\
010000 Nur-Sultan \\
Republic of Kazakhstan}
\email{achenef@gmail.com}
\address{Department of Mathematics
\\
Adama University of Science and Technology
\\
Ethiopia}
   \email{tamirat.temesgen@astu.edu.et, tilahundeneke8@yahoo.com}
\keywords{ Water waves, Whitham-Boussinesq systems, Dispersive estimates, Well-posedness.}
\subjclass[2010]{5Q53, 35Q35, 76B15, 35A01, 76B03}
\begin{document}

\begin{abstract} 
 We derive a  \(L^1_x (\R^d)-L^{\infty}_x (\R^d)\) decay estimate of order $\mathcal O \left( t^{-d/2}\right)$ for the linear propagators
$$
\exp \left( {\pm it  \sqrt{ |D|\left(1+ \beta |D|^2\right)  \tanh |D |  }   }\right) ,   \qquad \beta \in \{0, 1\}. \quad D= -i\nabla,
$$
with a loss of $3d/4$ or $d/4$ -- derivatives in the case $\beta=0$ or $\beta=1$, respectively.
These linear propagators are known to be associated with the linearized
water wave equations, where the parameter $\beta$  measures surface tension effects.
As an application we prove low regularity well-posedness for a  Whitham--Boussinesq type system in $\R^d$, $d\ge 2$. This generalizes a recent result by Dinvay, Selberg and the third author where they proved low regularity well-posedness in $\R$ and $\R^2$.

\end{abstract}

\maketitle

 \section{Introduction}
 In this paper, we derive a $L^1_x (\R^d)-L^{\infty}_x (\R^d)$ time-decay estimate for the linear propagators
$$
\mathcal S_{m_\beta}(\pm t) := \exp\left(\mp it  m_\beta (D) \right) ,
$$
where
$$
m_\beta (D)=\sqrt{ |D|\left(1+ \beta |D|^2\right)  \tanh  |D |  }
$$
with  $\beta \in \{0, 1\}$ and $D=-i\nabla$.
The pseudo-differential operator  $m_\beta (D)$ appears in linearized
water wave type equations. The cases $\beta=0$ and $\beta=1$ correspond respectively to purely gravity waves and capillary-gravity waves. 

For instance, consider the Whitham equation without or with surface tension
(see e.g., \cite{DMDK2017, RK2017})
\begin{equation}\label{Whit}
u_t+L_\beta u_x+ uu_x=0,   \qquad  (\beta \ge 0),
\end{equation}
where $u:\R\times \R \rightarrow \R$, and the non-local operator $L_\beta$ is related to the dispersion relation of the (linearized) water waves system and is defined by
$$L_\beta:=L_\beta(D) = \sqrt{ \left(1+ \beta |D|^2  \right)  K(D) }$$
with
$$K(D) =\frac{\tanh  |D|}{ |D|} \qquad (D= -i\nabla=-i\partial_x).$$
The linear part of \eqref{Whit} can be written as
\begin{equation}\label{Whit-lin}
iu_t -  \widetilde m_\beta(D) u=0,
\end{equation}
where
$$\widetilde m_\beta (D)=D L_\beta(D)= \frac {D} {|D|} m_\beta (D) .$$
In terms of Fourier symbols we have $\widetilde m_\beta(\xi)= \sgn(\xi) m_\beta(\xi)$.
So the solution propagator for \eqref{Whit-lin} is given by $\mathcal S_{\widetilde m_\beta}(t)=\exp\left( - it  \widetilde m_\beta (D) \right) $.
 In fact, both $\mathcal S_{\widetilde m_\beta}(t)$ and $\mathcal S_{ m_\beta}(t)$ satisfy the same $L^1_x (\R)-L^{\infty}_x (\R)$ time-decay estimate.

As another example, consider the full
dispersion Boussinesq system (see e.g., \cite{KLPS18, D2013}) 
\begin{equation}
    \label{FD2d}
    \left\lbrace
    \begin{array}{l}
    \eta_t+  L_\beta^2 \nabla\cdot {\bf v}+ \nabla \cdot(\eta {\bf v})=0 \\
    {\bf v}_t+\nabla \eta+\nabla |{\bf v}|^2=0,
 \end{array}\right.
    \end{equation}
    where $$
    \eta:\R\times \R \rightarrow \R, \qquad \mathbf v:\R^d\times \R \rightarrow \R^d.$$
    This system
describes the evolution with time of surface waves of
a liquid layer, where 
  $\eta$ and $\mathbf v$
denote the surface elevation
and the fluid velocity, respectively. One can derive an equivalent system of \eqref{FD2d}, by diagonalizing its linear part. Indeed, define
\begin{align*}
w_\pm=\frac{\eta \mp i L_\beta  \mathcal R \cdot \mathbf v}{2  L_\beta },
\end{align*}
where $(\eta, \mathbf v)$ is a solution to \eqref{FD2d}, and 
 $ \mathcal R=|D|^{-1}\nabla $ is the Riesz transform. Then the linear part of the system \eqref{FD2d} transforms to 
 \begin{equation}
    \label{FD2d-lin}
    i \partial_t w_\pm  \mp m_\beta(D) w_\pm=0
    \end{equation}
 whose corresponding solution propagators are
$
\mathcal S_{m_\beta}( \pm t) = \exp\left(\mp  it  m_\beta (D) \right)  .
$
    
    As a third example, consider the Whitham--Boussinesq type system (see e.g., \cite{Cr18, DDT22, Dy19, DDK19}) 
\begin{equation}
\label{wtbsq}
\left\{
\begin{aligned}
  \partial_t \eta +  \nabla \cdot  \mathbf v & = -  K\nabla \cdot (\eta \mathbf v),
  \\
  \partial_t \mathbf v + L_\beta^2 \nabla \eta  &=   -  K  \nabla ( |\mathbf v |^2/2).
\end{aligned}
\right.
\end{equation} 
 Again, by defining the new variables
\begin{equation}\label{diag-wtsbq}
u_\pm=\frac{ L_\beta \eta \mp i  \mathcal R\cdot \mathbf v}{2 L_\beta },
\end{equation}
    we see that the linear part of \eqref{wtbsq} transforms to 
 \begin{equation}
    \label{wtbsq-lin}
    i \partial_t u_\pm  \mp m_\beta(D) u_\pm=0
    \end{equation}
 whose solution propagators are again
$
\mathcal S_{m_\beta}( \pm t) = \exp\left(\mp  it  m_\beta (D) \right)  .
$

So the linear propagators 
$
\mathcal S_{m_\beta}( \pm  t) 
$  appear in all of the equations \eqref{Whit}, \eqref{FD2d} and \eqref{wtbsq}. 
Since the symbol $m_{\beta}$ is non-homogeneous, we will derive a time-decay estimate from
$$
\mathcal S_{m_\beta}( \pm  t): L^1_x (\R^d) \rightarrow  L^{\infty}_x (\R^d)
$$ 
for frequency localised functions. 
To this end, we fix a smooth cutoff function $\chi$ such that
\begin{equation*}
\chi \in C_0^{\infty}(\mathbb R), \quad 0 \le \chi \le 1, \quad
\chi_{|_{[-1,1]}}=1 \quad \mbox{and} \quad  \mbox{supp}(\chi)
\subset [-2,2].
\end{equation*}
 Set
$$
\rho(s)
=\chi\left(s\right)-\chi \left(2s\right).
 $$
 Thus, $\supp \rho= 
\{ s\in \R: 1/ 2 \le |s| \le 2 \}$. For 
  $\lambda \in  2^\Z$ we set $\rho_{\lambda}(s):=\rho\left(s/\lambda\right)$
 and define the frequency projection $P_\lambda$ by
\begin{align*}
\widehat{P_{\lambda} f}(\xi)  = \rho_\lambda(|\xi|)\widehat { f}(\xi) .
 \end{align*}
 Sometimes, we write $f_\lambda: =P_\lambda f$.

 \medskip
\noindent \textbf{Notation}. 
For any positive numbers $a$ and $b$, the notation $a\lesssim b$ stands for $a\le cb$, where $c$ is a positive constant that may change from line to line. Moreover, we denote $a \sim b$  when  $a \lesssim b$ and $b \lesssim a$.
We also set $\angles{x}:= \left(1 +|x|^2\right)^{1/2}.$

 For $1\le p\le\infty$, $L^p_x(\mathbb R^d)$ denotes the usual Lebesgue space and for $s\in\R$, $H^s(\mathbb R^d)$ is the $L^2$-based Sobolev space with norm $\|f\|_{H^s}=\|\angles{D}^s f\|_{L^2}$.
If $T>0$ and $1\le q \le\infty$, we define the spaces $L^q\big((0,T) : L^r (\R^d)\big)$ and $L^q\big( \mathbb R : L^r (\R^d)\big)$ respectively through the norms
$$
\|f\|_{L^q_TL^r_x} = \left( \int_0^T \|f(\cdot,t)\|_{L^r_x}^q dt \right)^{\frac1q} \quad  \textrm{and} \quad \|f\|_{L^q_tL^r_x} = \left( \int_{\mathbb R} \|f(\cdot,t)\|_{L^r_x}^p dt \right)^{\frac1q} \, ,
$$
when $1 \le q < \infty$, with the usual modifications when $q=+\infty$.

 \vspace{2mm}
 
Our first result is as follows:
\begin{theorem}[Localised dispersive estimate] \label{thm-dispest}
Let $\beta \in \{0, 1\}$,  $d\ge 1$ and $\lambda\in 2^\Z$.
Then 
\begin{align}
\label{dispest}
\| \mathcal S_{m_\beta}(\pm t) f_{\lambda}  \|_{L^\infty_x(\R^d)} &\lesssim c_{\beta,d} (\lambda) \ |t|^{-\frac d2}  \| f\|_{L_x^1(\R^d)} \end{align}
for all $f \in \mathcal{S}(\R^d)$, where 
\begin{equation}\label{cbeta}
 c_{\beta,d} (\lambda)= \lambda^{\frac d2-1}  \angles{ \sqrt{\beta}  \lambda}^{-\frac d2} \angles{\lambda}^{\frac d4+1}.
\end{equation}

 \end{theorem}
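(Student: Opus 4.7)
The plan is to reduce the $L^1_x\to L^\infty_x$ bound to an $L^\infty_x$-bound on the convolution kernel of the frequency-localized propagator. Since
\[
\mathcal{S}_{m_\beta}(\pm t) f_\lambda = K_\lambda^\pm(t,\cdot) * f, \qquad K_\lambda^\pm(t, x) := (2\pi)^{-d}\int_{\R^d} e^{i(x\cdot\xi \mp t m_\beta(|\xi|))} \rho(|\xi|/\lambda) \, d\xi,
\]
Young's inequality gives $\|\mathcal{S}_{m_\beta}(\pm t)f_\lambda\|_{L^\infty_x} \le \|K_\lambda^\pm(t,\cdot)\|_{L^\infty_x}\|f\|_{L^1_x}$, so it is enough to show $\|K_\lambda^\pm(t,\cdot)\|_{L^\infty_x} \lesssim c_{\beta,d}(\lambda)|t|^{-d/2}$.

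Exploiting the radial symmetry, I would pass to polar coordinates $\xi = r\omega$ and use the Bessel identity $\int_{S^{d-1}} e^{is\omega_1}\,d\omega = c_d s^{-(d-2)/2} J_{(d-2)/2}(s)$ to rewrite
\[
K_\lambda^\pm(t, x) = c_d |x|^{-(d-2)/2}\int_0^\infty J_{(d-2)/2}(r|x|)\, e^{\mp it m_\beta(r)}\, \rho(r/\lambda)\, r^{d/2}\, dr,
\]
and then split on the size of $\lambda|x|$. In the regime $\lambda|x|\gtrsim 1$, the Bessel asymptotic $J_\nu(s) = \sqrt{2/(\pi s)}\cos(s-\nu\pi/2-\pi/4)+O(s^{-3/2})$ reduces the kernel to a genuine one-dimensional oscillatory integral with phase $\psi_\pm(r) = \pm r|x| \mp tm_\beta(r)$ and amplitude concentrated at $r\sim\lambda$, and van der Corput applied with $\psi_\pm''(r) = \mp t m_\beta''(r)$ gives
\[
|K_\lambda^\pm(t,x)|\lesssim |x|^{-(d-1)/2}\lambda^{(d-1)/2}\, |t\,m_\beta''(\lambda)|^{-1/2}.
\]
The supremum over $x$ is saturated near the stationary-point configuration $|x|\sim |t|\,m_\beta'(\lambda)$, which plugs back in to give the target bound $|t|^{-d/2}|m_\beta''(\lambda)|^{-1/2}(\lambda/m_\beta'(\lambda))^{(d-1)/2}$. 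In the complementary regime $\lambda|x|\lesssim 1$, the bound $|J_\nu(s)|\lesssim s^\nu$ near $s=0$ reduces the problem to $\int_0^\infty e^{\mp itm_\beta(r)}\rho(r/\lambda)r^{d-1}dr$, which I would handle by the trivial estimate $\lambda^d$ when $|t|\lambda m_\beta'(\lambda)\lesssim 1$ and by iterated integration by parts (using $|\partial_r m_\beta(r)|\sim m_\beta'(\lambda)$ on $\supp\rho(\cdot/\lambda)$) otherwise; a direct computation shows both alternatives are controlled by the same right-hand side.

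The main obstacle, and the analytic heart of the argument, is to establish sharp two-sided bounds on the first two derivatives of $m_\beta$. Concretely I expect to prove
\[
m_\beta'(r) \sim \angles{r}^{-1/2}\angles{\sqrt\beta\,r}, \qquad |m_\beta''(r)|\sim r\,\angles{r}^{-5/2}\angles{\sqrt\beta\,r}, \qquad r>0,\ \beta\in\{0,1\},
\]
using the Taylor expansion $\tanh r = r - r^3/3 + O(r^5)$ near the origin together with $\tanh r = 1 + O(e^{-2r})$ as $r\to\infty$. The delicate point is to verify that the would-be leading term in $m_\beta''$ does not vanish at any intermediate $r>0$, so that $m_\beta$ is strictly concave (for $\beta=0$) or strictly convex (for $\beta=1$) and van der Corput applies uniformly on $\supp\rho(\cdot/\lambda)$. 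Once these bounds are in hand, direct substitution yields
\[
|m_\beta''(\lambda)|^{-1/2}\bigl(\lambda/m_\beta'(\lambda)\bigr)^{(d-1)/2}\sim \lambda^{d/2-1}\angles{\sqrt\beta\,\lambda}^{-d/2}\angles{\lambda}^{d/4+1} = c_{\beta,d}(\lambda),
\]
which is exactly the claimed loss factor.
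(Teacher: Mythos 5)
Your overall framework (Young's inequality, radial reduction via Bessel functions, derivative bounds on $m_\beta$) matches the paper's, but the central oscillatory-integral step in the regime $\lambda|x|\gtrsim 1$ has a real gap. You apply van der Corput to obtain
\[
|K_\lambda^\pm(t,x)| \lesssim |x|^{-(d-1)/2}\lambda^{(d-1)/2}\,|t\,m_\beta''(\lambda)|^{-1/2}
\]
for all $|x|\gtrsim\lambda^{-1}$, and then assert that the supremum over $x$ "is saturated near $|x|\sim|t|\,m_\beta'(\lambda)$". But the right-hand side above is \emph{increasing} as $|x|$ decreases, so the supremum of this bound over the allowed range $|x|\gtrsim\lambda^{-1}$ occurs at $|x|\sim\lambda^{-1}$, where it gives only $\lambda^{d-1}\,|t\,m_\beta''(\lambda)|^{-1/2}\sim t^{-1/2}$ — nowhere near the needed $t^{-d/2}$ once $d\ge 2$. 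Van der Corput's lemma cannot detect whether a stationary point is actually present; it produces the same $t^{-1/2}$ estimate either way, so it cannot by itself tell you that the kernel is small when $|x|$ is far from $|t|\,m_\beta'(\lambda)$. What is needed (and what the paper does) is a stationary/non-stationary dichotomy: after writing $r^{-(d-2)/2}J_{(d-2)/2}(r|x|)=e^{ir|x|}h(r|x|)+e^{-ir|x|}\bar h(r|x|)$, the "$+$" piece and the non-stationary part of the "$-$" piece (where $|x|\not\sim t\,m_\beta'(\lambda)$) are estimated by \emph{iterated non-stationary phase} (the paper's Lemma \ref{lm-corput1} with $N=\lceil d/2\rceil$), which yields $t^{-N}$ decay and dominates the loss from $(\lambda|x|)^{-(d-1)/2}$ thanks to the standing assumption $t\gg\lambda^{-1/2}\angles{\sqrt\beta\lambda}^{-1}$; van der Corput is reserved only for the stationary shell $|x|\sim t\,m_\beta'(\lambda)$, where the factor $(\lambda|x|)^{-(d-1)/2}$ is then genuinely small, and this is the computation that produces $c_{\beta,d}(\lambda)\,t^{-d/2}$.

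A second, related gap: the iterated integration by parts requires controlling $\partial_r^k\bigl(1/\phi'\bigr)$ up to order $k\sim d/2$, which in turn needs bounds on $m_\beta^{(k)}$ for all $k\ge 1$, not just the first two derivatives you propose to prove (the paper's Lemma \ref{lm-mest} estimate \eqref{mj-est} and its Corollary \ref{cor-m}). You should incorporate these higher-derivative bounds, or the non-stationary argument you gesture at for the small-$|x|$ regime will also not close.
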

 
 \vspace{2mm}
\begin{remark} 
In view of \eqref{cbeta} the loss of derivatives (this corresponds to the exponent of $\lambda$) is $3d/4$ in the case 
$\beta=0$, whereas  the loss is $d/4$ when $\beta=1$.
 \end{remark}

Once Theorem \ref{thm-dispest} is proved, the corresponding Strichartz estimates are deduced from a classical $TT^{\star}$ argument.
\begin{theorem}[Localised Strichartz estimate]\label{thm-Str}
Let $\beta \in \{0, 1\}$,  $d\ge 1$ and $\lambda\in 2^\Z$.
Assume that the pair $(q,r)$ satisfies the following conditions:
\begin{equation}\label{endtpt}
  2< q \le \infty,  \quad 2\le  r \le \infty, \qquad \frac 2q+ \frac dr = \frac d2.
\end{equation}
Then 
 \begin{align}
\label{Strest1d}
\norm{ \mathcal S_{m_\beta}(\pm t)  f_{\lambda}}_{ L^{q}_{t} L^{r}_{ x} (\R^{d+1}) } \lesssim \left[ c_{\beta,d} (\lambda) \right]^{ \frac 2{qd}  } 
\norm{  f_{\lambda}}_{ L^2_{ x}(\R^d )} ,
\\
\label{Strest1d-inh}
\norm{ \int_0^t  \mathcal S_{m_\beta} (\pm(t-s) )F_\lambda (s) \, ds}_{ L^{q}_{t} L^{r}_{ x} (\R^{d+1}) } \lesssim  \left[ c_{\beta,d} (\lambda) \right]^{ \frac 2{qd} } 
\norm{ F_{\lambda}}_{ L_t^1L^2_{ x}(\R^{d+1} )} 
\end{align}
for all  $f \in \mathcal{S}(\R^d)$ and  $F \in \mathcal{S}(\R^{d+1})$, where $c_{\beta,d} (\lambda)$ is as in \eqref{cbeta}.

\end{theorem}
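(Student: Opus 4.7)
\emph{Plan.} The strategy is the classical $TT^*$ argument of Ginibre--Velo/Strichartz, fed by two inputs: Theorem \ref{thm-dispest} and the fact that $\mathcal S_{m_\beta}(\pm t)$ is unitary on $L^2_x(\R^d)$ because its Fourier symbol $m_\beta(\xi)$ is real. Since the hypotheses require $q > 2$ strictly, we stay in the non-endpoint regime and no Keel--Tao machinery is needed.

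For the homogeneous bound \eqref{Strest1d}, Riesz--Thorin interpolation between the unitary estimate $\|\mathcal S_{m_\beta}(\pm t) f_\lambda\|_{L^2_x} \le \|f_\lambda\|_{L^2_x}$ and the dispersive estimate \eqref{dispest} gives, for $2 \le r \le \infty$,
$$
\|\mathcal S_{m_\beta}(\pm t) f_\lambda\|_{L^r_x} \lesssim \bigl[c_{\beta,d}(\lambda)\bigr]^{1 - 2/r}\, |t|^{-d(1/2 - 1/r)}\, \|f\|_{L^{r'}_x}.
$$
Set $T f := \mathcal S_{m_\beta}(\pm t) P_\lambda f$. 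The admissibility identity $d(1/2 - 1/r) = 2/q$ then converts the kernel estimate for $TT^*F(t) = \int \mathcal S_{m_\beta}(\pm(t-s)) P_\lambda^2 F(s)\, ds$ into
$$
\|TT^*F(t)\|_{L^r_x} \lesssim \bigl[c_{\beta,d}(\lambda)\bigr]^{1 - 2/r} \int_{\R} |t-s|^{-2/q}\, \|F(s)\|_{L^{r'}_x}\, ds .
$$
For $2 < q < \infty$, the one-dimensional Hardy--Littlewood--Sobolev inequality (with $1/q' - 1/q = 1 - 2/q$) upgrades this to $\|TT^*F\|_{L^q_t L^r_x} \lesssim [c_{\beta,d}(\lambda)]^{1 - 2/r} \|F\|_{L^{q'}_t L^{r'}_x}$, while the case $q = \infty$, $r = 2$ is the trivial energy identity. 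The standard duality $\|T\|^2 = \|TT^*\|$, together with the algebraic identity $(1 - 2/r)/2 = 2/(qd)$ (a direct consequence of $2/q + d/r = d/2$), then yields \eqref{Strest1d}.

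The retarded inhomogeneous estimate \eqref{Strest1d-inh} follows from \eqref{Strest1d} by Minkowski's inequality in the form
$$
\Bigl\| \int_0^t \mathcal S_{m_\beta}(\pm(t-s)) F_\lambda(s)\, ds \Bigr\|_{L^q_t L^r_x} \le \int_\R \|\mathcal S_{m_\beta}(\pm(t-s)) F_\lambda(s)\|_{L^q_t L^r_x}\, ds ,
$$
after which the homogeneous bound is applied at each fixed $s$ (using time-translation invariance of the propagator and $L^2$-boundedness of $P_\lambda$). Because the right-hand side of \eqref{Strest1d-inh} involves $L^1_t$ rather than $L^{q'}_t$, no Christ--Kiselev lemma or further fractional integration is needed. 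The one non-routine point in the whole argument is bookkeeping the loss factor: one must check that the power $[c_{\beta,d}(\lambda)]^{2/(qd)}$ in the statement is exactly half of the exponent $1 - 2/r$ produced by interpolation, which is forced automatically by admissibility. No deeper obstacle appears, since all of the genuinely dispersive work has already been carried out in Theorem \ref{thm-dispest}.
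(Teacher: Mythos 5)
Your proposal is correct and follows essentially the same route as the paper: interpolate the frequency-localised kernel bound between the trivial $L^2$--$L^2$ estimate and the dispersive $L^1$--$L^\infty$ estimate of Theorem~\ref{thm-dispest}, run the $TT^*$ argument with the one-dimensional Hardy--Littlewood--Sobolev inequality in the time variable, and check that admissibility forces the exponent on $c_{\beta,d}(\lambda)$. Your observation that the inhomogeneous bound \eqref{Strest1d-inh}, having an $L^1_t L^2_x$ right-hand side, follows directly from the homogeneous bound and Minkowski (with no Christ--Kiselev argument needed) is a slightly more explicit formulation of what the paper labels ``the standard $TT^*$-argument,'' and is perfectly sound.
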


As an application of Theorem \ref{thm-Str} we prove low regularity well-posedness for the system \eqref{wtbsq} with $\beta=0$ (i.e., for purely gravity waves) in $\R^d$, $d\ge 2$. 
To this end, we complement the system \eqref{wtbsq} with initial data 
\begin{equation}\label{data-wtbsq}
\eta(0)= \eta_0 \in  H^s(\mathbb{R}^d), \qquad  \mathbf v(0)= \mathbf v_0 \in  \left( H^{s + 1/2} \left( \mathbb R^d \right) \right) ^d.
\end{equation}
\begin{theorem}
\label{theorem3d}
	Let $\beta=0$, $d\ge 2$ and $s>\frac d2-\frac 34$. Suppose that $\mathbf v_0$ is a curl-free vector field, i.e., $\nabla \times \mathbf v_0=0$, and
	$$
	\left[\lVert \eta_0 \rVert _{H^s(\R^d)}
		+ \lVert \mathbf v_0 \rVert _{ (H^{s + 1/2} (\R^d)) } \right] \le \mathcal D_0.
	$$
	Then there exists a solution
	$$
		( \eta, \mathbf v ) \in C \left([0, T];
		H^s(\mathbb{R}^d)
		\times
		\left( H^{s + 1/2} \left( \mathbb R^d \right) \right) ^d
		\right)
$$
of the Cauchy problem \eqref{wtbsq}, \eqref{data-wtbsq}, with existence time \footnote{Here we used the notation $a \pm: =a \pm \varepsilon$ for sufficiently small $\varepsilon >0$.} $T\sim \mathcal D_0^{-2-}$.
	
	Moreover, the solution is unique in some subspace of the above solution space and the solution depends continuously on the initial data.
\end{theorem}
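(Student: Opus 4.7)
The plan is to reduce the system \eqref{wtbsq} to a pair of scalar half-wave type equations via the diagonalisation \eqref{diag-wtsbq}, and then to close a Picard iteration in a Banach space that combines an energy norm with the frequency-localised Strichartz norms from Theorem \ref{thm-Str}.

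\textbf{Step 1 (Diagonalisation and curl-free structure).} Taking $\nabla\times$ of the $\mathbf v$-equation in \eqref{wtbsq} and using that both $L_0^2\nabla \eta$ and $K(D)\nabla(|\mathbf v|^2/2)$ are gradients of radial-multiplier images, one sees that $\partial_t(\nabla\times \mathbf v)=0$. Thus $\nabla\times \mathbf v_0=0$ is propagated, and we may write $\mathbf v=\mathcal R V$ for the scalar $V:=\mathcal R\cdot\mathbf v\in H^{s+1/2}$. Substituting into \eqref{diag-wtsbq} with $\beta=0$ and the nonlinear terms in \eqref{wtbsq} yields, for $u_\pm(0)\in H^s(\R^d)$,
$$
i\partial_t u_\pm\mp m_0(D)u_\pm=\mathcal N_\pm(u_+,u_-),
$$
where each $\mathcal N_\pm$ is a finite sum of bilinear terms of the schematic form $M_0(D)\bigl(M_1(D)u\cdot M_2(D)v\bigr)$ with $M_j(D)$ radial Fourier multipliers whose symbols satisfy, on the dyadic block $|\xi|\sim\lambda$, $|M_j(\xi)|\lesssim \angles{\lambda}^{1/2}$ (the $\angles{\lambda}^{1/2}$ growth coming from $L_0^{-1}\sim|D|^{1/2}$ at high frequency).

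\textbf{Step 2 (Iteration space and linear estimates).} Fix a Strichartz pair $(q,r)$ satisfying \eqref{endtpt} with $q>2$ to be chosen, and put
$$
\norm{u}_{Y_T}^2:=\norm{u}_{L^\infty_T H^s}^2+\sum_{\lambda\in 2^\Z}\lambda^{2s}\bigl[c_{0,d}(\lambda)\bigr]^{-4/(qd)}\norm{P_\lambda u}_{L^q_T L^r_x}^2.
$$
By Theorem \ref{thm-Str} together with Littlewood-Paley, the homogeneous propagator satisfies $\norm{\mathcal S_{m_0}(\pm t)u_\pm(0)}_{Y_T}\lesssim \Sobnorm{u_\pm(0)}{s}$, and the Duhamel operator $F\mapsto \int_0^t \mathcal S_{m_0}(\pm(t-\tau))F\,d\tau$ maps $L^1_T H^s$ continuously into $Y_T$. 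Recall that for $\lambda\gtrsim 1$ we have $c_{0,d}(\lambda)\sim\lambda^{3d/4}$, hence the dyadic Strichartz loss is $\lambda^{3/(2q)}$.

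\textbf{Step 3 (Bilinear estimate and contraction).} The heart of the proof is to establish, for $s>d/2-3/4$, a bilinear estimate of the form
$$
\norm{\mathcal N_\pm(u,v)}_{L^1_T H^s}\lesssim T^{\theta}\,\norm{u}_{Y_T}\norm{v}_{Y_T},\qquad \theta>0.
$$
Decompose each factor dyadically and treat separately the high-high, high-low and low-low interactions. Using Hölder in space-time with one factor in $L^q_T L^r_x$ (with the Strichartz weight $[c_{0,d}(\lambda)]^{2/(qd)}\sim\lambda^{3/(2q)}$) and the other controlled via Sobolev embedding $H^{s+3/(2q)+}\hookrightarrow L^\infty$, the derivative budget just closes at $s>d/2-3/4$ after letting $q\downarrow 2$. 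The $\angles{\lambda}^{1/2}$ growth of the multipliers $M_j$ is absorbed by the extra half-derivative regularity of $\mathbf v$ encoded in the choice of $u_\pm$. Hölder in time yields the factor $T^{\theta}$ with $\theta=1-2/q$, which can be pushed to $\theta=\tfrac12-$, so that the Picard map $\Phi(u_\pm)(t)=\mathcal S_{m_0}(\pm t)u_\pm(0)-i\int_0^t \mathcal S_{m_0}(\pm(t-\tau))\mathcal N_\pm\,d\tau$ contracts on a ball of radius $\sim \mathcal D_0$ in $Y_T$ provided $T^\theta\mathcal D_0\ll 1$, giving $T\sim \mathcal D_0^{-2-}$. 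Lipschitz dependence on data and uniqueness in the corresponding subspace of $Y_T$ follow from the same bilinear estimate applied to differences; inverting \eqref{diag-wtsbq} recovers $(\eta,\mathbf v)$ at the claimed regularity.

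\textbf{Main obstacle.} The principal difficulty is Step 3. The symbol $m_0(\xi)\sim|\xi|^{1/2}$ at high frequency is weaker than Schrödinger's $|\xi|^2$, so the Strichartz derivative gain is merely $3/(2q)$, with the critical value $3/4$ unattainable because $q=2$ is excluded. Extracting the full $3d/4$ loss of Theorem \ref{thm-dispest} dyadically and offsetting it against the $\angles{\lambda}^{1/2}$ factors from $L_0^{-1}$ in the nonlinearity is what permits the iteration to close precisely at $s>d/2-3/4$; any coarser frequency analysis would lose the endpoint and force a strictly larger regularity threshold.
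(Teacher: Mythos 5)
Your overall plan -- diagonalise \eqref{wtbsq} via \eqref{diag-wtsbq} into a pair of half-wave equations for $u_\pm$, build an iteration space out of $L^\infty_T H^s$ plus the frequency-weighted Strichartz norms of Theorem~\ref{thm-Str}, and close a bilinear contraction -- is exactly the paper's architecture (their $X^s_T$ is your $Y_T$ up to the inhomogeneous low-frequency weight, which is harmless).

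There is, however, an imprecision in your Step~3 that would have to be repaired before the argument closes. You propose to put one factor in $L^q_T L^r_x$ and the \emph{other} in $L^\infty_x$ via Sobolev embedding of its $H^{s}$ component; but the product of an $L^r_x$ function ($r>2$) with an $L^\infty_x$ function lands in $L^r_x$, which does not embed into $L^2_x$ on $\R^d$ (frequency localisation of the output does not help, since Bernstein only raises Lebesgue exponents). The paper avoids this by doing it the other way around: the Strichartz factor is pushed from $L^q_T L^r_x$ up to $L^2_T L^\infty_x$ via Bernstein (cost $\lambda^{d/2-2/q}$) and H\"older in $t$ (cost $T^{1/2-1/q}$), while the energy factor stays in $L^\infty_T L^2_x$, so the product is $L^2_{T,x}\hookrightarrow T^{1/2}\,L^1_T L^2_x$. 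Moreover, the numerology that gets you to $s>\frac d2-\frac34$ is not a $\angles{\lambda}^{1/2}$ growth ``absorbed by extra regularity of $\mathbf v$,'' but a genuine $\angles{\lambda}^{-1/2}$ decay from the $\mathcal R\sqrt K$ multipliers acting on each factor in $\mathcal B^\pm$ (see \eqref{wtnonlin} and Lemma~\ref{lm-biest}); the Strichartz weight $\angles{\lambda}^{3/(2q)}$ is a \emph{loss}, and the final threshold is $s>\frac d2-\frac12-\frac1{2q}$, which tends to $\frac d2-\frac34$ as $q\downarrow2$. Also $\theta$ should be $1-\frac1q$, not $1-\frac2q$, to reach $\theta\to\frac12^-$. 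None of these corrections change the skeleton of the argument, but as written the H\"older split in Step~3 does not produce an $L^1_TL^2_x$ output.
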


 \begin{remark} 
 The following are known results:
 \begin{enumerate}[(i)]
 \item  Theorem 1 and Theorem 2  are proved in \cite{DST20} when $\beta=0$, $d\in \{1,2\}$ and $\lambda \gtrsim 1$.
 \item  Theorem 3 is proved in \cite{DST20} when $d=2$. In the case $d=1$ local well-posedness for $s>-1/10$ and  global well-posedness for small initial data in $L^2(\R)$ is also established in \cite{DST20}. Long-time existence of solution in the case $d=2$
 is also obtained in \cite{T2020}.
 \end{enumerate}

 \end{remark}

\vspace{2mm}
Van der Corput's Lemma will be useful in the proof of Theorem \ref{thm-dispest}.

\begin{lemma}[Van der Corput's Lemma, \cite{S93} ]\label{lm-corput}
 Assume 
$g \in C^1(a, b)$, $\psi\in  C^2(a, b)$ and $|\psi''(r)|  \ge  A$ for all $r\in (a, b)$. Then 
\begin{align}
\label{corput} 
\Bigabs{\int_a^b e^{i t  \psi(r)}   g(r) \, dr}& \le C  (At)^{-1/2}  \left[ |g(b)| + \int_a^b |g'(r)| \, dr \right] ,
\end{align}
for some constant $C>0$ that is independent of $a$, $b$ and $t$.
\end{lemma}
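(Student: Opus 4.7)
The plan is to prove the lemma in two steps: first handle the amplitude-free case $g \equiv 1$, then deduce the general case by a single integration by parts.

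\medskip
\noindent\textbf{Step 1 ($g \equiv 1$).} I would show that for every subinterval $(\alpha, \beta) \subseteq (a,b)$,
$$
\Bigabs{\int_\alpha^\beta e^{it\psi(r)} \, dr} \le C(At)^{-1/2}.
$$
Since $\psi'' \in C^0(a,b)$ and $|\psi''(r)| \ge cA > 0$, continuity forces $\psi''$ to have constant sign on $(a,b)$, so WLOG $\psi'' \ge cA$ there, which makes $\psi'$ strictly increasing. For a parameter $\delta > 0$ to be chosen, partition $(\alpha,\beta) = E_1 \cup E_2$ with $E_1 = \{r: |\psi'(r)| \le \delta\}$ and $E_2$ its complement. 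Because $\psi'$ has slope $\ge cA$, $E_1$ is an interval of length $\le 2\delta/(cA)$, so the trivial bound gives $|\int_{E_1} e^{it\psi}| \lesssim \delta/A$. The set $E_2$ is at most two intervals on which $|\psi'| > \delta$; on each, write $e^{it\psi} = (it\psi')^{-1} \partial_r e^{it\psi}$ and integrate by parts. The boundary terms are bounded by $1/(t\delta)$, and the interior term $\int \psi''/(t(\psi')^2)\, dr$ telescopes as $[-1/(t\psi')]$ (using the constant sign of $\psi''$) to again $O(1/(t\delta))$. Thus
$$
\Bigabs{\int_\alpha^\beta e^{it\psi} \, dr} \lesssim \frac{\delta}{A} + \frac{1}{t\delta},
$$
and optimizing $\delta \sim \sqrt{A/t}$ yields the desired $(At)^{-1/2}$ bound.

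\medskip
\noindent\textbf{Step 2 (general $g$).} Define
$$
F(r) := \int_a^r e^{it\psi(s)} \, ds, \qquad r \in [a,b],
$$
so $F(a) = 0$, $F'(r) = e^{it\psi(r)}$, and by Step 1, $\|F\|_{L^\infty(a,b)} \le C(At)^{-1/2}$. Integration by parts with $u = g$, $dv = F'(r)\,dr$ gives
$$
\int_a^b e^{it\psi(r)} g(r) \, dr = F(b) g(b) - \int_a^b F(r) g'(r) \, dr,
$$
and taking absolute values produces \eqref{corput}. The appearance of $|g(b)|$ rather than $\|g\|_{L^\infty}$ in the final estimate is exactly a consequence of this integration by parts together with $F(a) = 0$.

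\medskip
The main obstacle is Step 1, where one must combine (a) the measure estimate on the near-critical set $E_1$, which relies on the monotonicity of $\psi'$ coming from the lower bound on $\psi''$, with (b) the telescoping identity $\int \psi''/(\psi')^2 \, dr = [-1/\psi']$ on $E_2$, which depends crucially on $\psi''$ not changing sign. Once these are in place the optimization in $\delta$ is the standard Van der Corput trade-off, and Step 2 is a routine integration by parts.
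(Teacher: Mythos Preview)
The paper does not prove this lemma; it merely cites it from Stein's book \cite{S93} as a classical result, so there is no ``paper's own proof'' to compare against. Your proposal is a correct and complete rendition of the standard textbook argument (indeed essentially the one in Stein): the two-step structure---first the amplitude-free case via the $\delta$-splitting and optimization, then the reduction of general $g$ to that case by integrating by parts against the antiderivative $F$---is exactly how the result is usually established, and all the key points (constant sign of $\psi''$, telescoping of $\int \psi''/(\psi')^2$, the vanishing $F(a)=0$ that singles out the endpoint $b$) are handled correctly.
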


Lemma \ref{lm-corput} holds even if $\psi'(r)=0$ for some $r\in (a, b)$.
However, if $|\psi'(r)|> 0$ for all $r\in (a, b)$, one can use integration by parts to obtain the following Lemma. The proof may be found elsewhere but we include it here for the reader's convenience.
\begin{lemma}\label{lm-corput1}
Suppose that $g \in C^\infty_0(a,b)$ and $\psi\in C^\infty (a, b)$ with $|\psi'(r)|> 0$ for all $r\in (a, b)$. If 
  \begin{equation}\label{dervbd}
\max_{a\le r\le b}|\partial_r^j g(r)| \leq A, \qquad  \max_{a\le r\le b}  \Bigabs{\partial_r^j\left( \frac 1{\psi'(r)} \right)} \leq B
  \end{equation}
  for all $0 \le j \le \ N\in \N_0$,
  then 
\begin{align}
\label{corput'} 
\Bigabs{\int_a^b e^{i t  \psi(r)}   g(r) \, dr}& \lesssim A B^N  |t|^{-N} .
\end{align}
\end{lemma}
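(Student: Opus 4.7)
The plan is a standard non-stationary phase integration-by-parts argument. The starting point is the identity
\begin{equation*}
e^{it\psi(r)} = \frac{1}{it\,\psi'(r)}\,\frac{d}{dr}\,e^{it\psi(r)},
\end{equation*}
which makes sense precisely because the hypothesis $|\psi'(r)| > 0$ on $(a,b)$ guarantees that $1/\psi'$ is smooth there. Since $g \in C_0^\infty(a,b)$, it vanishes together with all derivatives at the endpoints, so integration by parts produces no boundary contribution:
\begin{equation*}
\int_a^b e^{it\psi(r)} g(r)\,dr = -\frac{1}{it}\int_a^b e^{it\psi(r)}\,\frac{d}{dr}\!\left(\frac{g(r)}{\psi'(r)}\right) dr.
\end{equation*}

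Next, I would iterate this identity $N$ times. Defining the operator $L h := \frac{d}{dr}(h/\psi')$, we get
\begin{equation*}
\int_a^b e^{it\psi(r)} g(r)\,dr = \left(\frac{-1}{it}\right)^{\!N}\int_a^b e^{it\psi(r)}\,(L^N g)(r)\,dr.
\end{equation*}
At each step $L$ preserves membership in $C_0^\infty(a,b)$ since both $g$ and $1/\psi'$ are smooth on $(a,b)$ and $g$ has compact support there; so the boundary terms continue to vanish through all $N$ integrations by parts.

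It then remains to bound $L^N g$ pointwise. Expanding $L^N g$ by the Leibniz rule gives a finite sum (with combinatorial coefficients depending only on $N$) of terms of the form
\begin{equation*}
\partial_r^{j_0} g(r)\,\cdot\,\prod_{\ell=1}^{k} \partial_r^{j_\ell}\!\left(\frac{1}{\psi'(r)}\right),
\end{equation*}
with $k\le N$, $0\le j_0\le N$, and $1\le j_\ell\le N$. Applying the hypotheses \eqref{dervbd} to each factor bounds every such term by $C_N A B^N$. Integrating over $(a,b)$ and taking the modulus yields
\begin{equation*}
\Bigabs{\int_a^b e^{it\psi(r)} g(r)\,dr} \le |t|^{-N}\,C_N (b-a)\,A B^N,
\end{equation*}
which is \eqref{corput'}. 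There is no real obstacle here; the only point that requires a moment of thought is keeping track of the Leibniz expansion of $L^N$ and observing that the power of $B$ is exactly $N$ (coming from the $N$ factors of $1/\psi'$ introduced by the iteration) rather than $N+1$ or higher, and that $g$ only appears through a single derivative factor $\partial_r^{j_0} g$, contributing a single power of $A$.
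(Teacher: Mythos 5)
Your proof is correct and takes essentially the same approach as the paper: iterated non-stationary-phase integration by parts using the smoothness of $1/\psi'$, followed by a Leibniz expansion and termwise application of the hypotheses to bound the $N$-fold integrand by $AB^N$. The only cosmetic point is that after $N$ applications of $L$ every term contains \emph{exactly} $N$ factors of $\partial_r^{j}(1/\psi')$ (some possibly with $j=0$), not ``$k\le N$ factors with $j_\ell\ge 1$'' as written; since the hypothesis covers $j=0$ as well, the final bound $AB^N$ is unaffected, and indeed you state the correct count in your closing remark.
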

\begin{proof}
Let 
$$
I(t)= \int_a^b e^{i t  \psi(r)}   g(r) \, dr.
$$
For $m, N \in  \N_0 $, define
$$
S_{m, N}:=\{  (k_1, \cdots, k_N) \in \N_0: \ k_1<\cdots <k_N \le N \ \ \& \ \  k_1 + \cdots + k_N= m \}.
$$
Integration by parts yields
\begin{align*}
I(t)&=I_1(t)= -it^{-1} \int_a^b \partial_r\left( e^{i t\psi(r)} \right) \frac 1{\psi'(r)}  g(r) \, dr
\\
&= (-it)^{-1} \int_a^b e^{i t\psi(r)}  \left\{ \frac 1{\psi'(r)}   g'(r) +  \partial_r\left(\frac 1{\psi'(r)} \right)  g(r) \right\}  \, dr.
\end{align*}
Repeating the integration by parts $N$--times we get 
\begin{align*}
I(t)&=I_N(t)= (-it)^{-N}\sum_{m=0}^N  \sum_{(k_1, \cdots, k_N )\in S_{m, N}} C_{k, m, N}  \int_a^b  e^{i t\psi(r)}E_{k,m, N} (r) \, dr,
\end{align*}
where $C_{k, m, N} $ are constants  and 
$$
E_{k, m, N}(r)  = \Pi_{j=1}^N \partial_r^{k_j} \left( \frac 1{\psi'(r)} \right)  \cdot g^{(N-m)}(r) .
$$
Applying \eqref{dervbd} we obtain
$$
|E_{k,m, N}|\le \left(  \Pi_{j=1}^N B  \right)  A =A B^N 
$$
and hence 
$$
\abs{I(t)}= \abs{I_N(t)}  \lesssim  A  B ^N |t|^{-N} 
$$
as desired.
\end{proof}

 \section{Proof of Theorem \ref{thm-dispest} }
  Without loss of generality we may assume $\pm =-$ and $t > 0 $.
 Now we can write
\[
\left[
\mathcal S_{m_\beta}(- t) f_\lambda \right](x) 
= (I_{\lambda} (\cdot, t)\ast f)(x),
\]
where
\begin{equation}\label{Idef}
 I_{\lambda} (x, t)
=  \lambda^d  \int_{\R^d} e^{i \lambda x \cdot \xi+ it {m_\beta}(\lambda \xi)}  \rho(|\xi|) \, d\xi .
\end{equation}
By Young's inequality
\begin{equation}\label{younginq}
\| \mathcal S_{m_\beta}(- t)   f_\lambda  \|_{L^\infty_x(\R^d)} \le  \| I_{\lambda (\cdot, t)} \|_{L^\infty_x(\R^d)}  \|f\|_{L_x^1(\R^d)},
\end{equation}
and therefore, \eqref{dispest} reduces to proving
\begin{equation}
\label{Iest-1}
\|I_{\lambda} (\cdot, t) \|_{L^\infty_x(\R^d)}  \lesssim  c_{\beta,d} (\lambda)  t^{-\frac d2} .
\end{equation}

Observe that
$$
 t\lesssim \lambda^{-1/2} \angles{ \sqrt{\beta}  \lambda}^{-1}  \quad \Rightarrow \quad c_{\beta,d} (\lambda)  t^{-\frac d2}  \gtrsim  \lambda^d
$$
in which case 
\eqref{Iest-1} follows from the simple estimate
\begin{equation*}
\|I_{\lambda} (\cdot, t) \|_{L^\infty_x(\R^d)} 
\lesssim  \lambda^d.
\end{equation*}
We may therefore assume from now on
\begin{equation}\label{tcond}
t\gg  \lambda^{-1/2} \angles{ \sqrt{\beta}  \lambda}^{-1}.
\end{equation}

\subsection{Proof of \eqref{Iest-1} when $d=1$}
In the case $d=1$ we have
\begin{align*}
I_{\lambda} (x, t)
=\lambda \int_{\R} e^{i t\phi_{\lambda }(\xi) }  \rho(|\xi|) \, d\xi,
\end{align*}
where
$$
\phi_{\lambda}(\xi):= \lambda  \xi x/t  +  m_\beta(\lambda \xi) .$$
We want to prove 
\begin{equation}
\label{Iest-1d}
\|I_{\lambda} (\cdot, t) \|_{L^\infty_x(\R)}  \lesssim  \underbrace{\lambda^{-\frac 12} \angles{ \sqrt{\beta}  \lambda}^{-\frac 12}  \angles{\lambda}^{\frac 54} }_{=c_{\beta, 1}}t^{-\frac 12} 
\end{equation}
under condition \eqref{tcond}.

Since $\supp \rho= 
\{ \xi \in \R: 1/ 2 \le |\xi| \le 2 \}$, we can write
\begin{align*}
I_{\lambda} (x, t)
 &=\underbrace{\lambda  \int_{1/2}^2 e^{i t\phi_{\lambda }(\xi) }  \rho(\xi) \, d\xi }_{:=I^+_{\lambda} (x, t)}+ \underbrace{\lambda   \int_{1/2}^2 e^{i t\phi_{\lambda }(-\xi) }  \rho(\xi) \, d\xi}_{:=I^-_{\lambda} (x, t)}.
\end{align*}
We estimate only $I^+_{\lambda} (x, t)$ as the estimate for $ I^-_{\lambda} (x, t)$ can be derived in  exactly the same way.
Since
\begin{align}
\label{ph'}
\phi_{\lambda}'(\xi)&=  \lambda  \left[  x/t  +  m_\beta'(\lambda \xi)\right] ,
\qquad 
  \phi_{\lambda }''(\xi)=     \lambda^2 m_\beta''(\lambda \xi)
\end{align}
it follows from Lemma \ref{lm-mest} that
\begin{equation}
\label{Est-phi''-1}
|\phi_{\lambda }''(\xi) |\sim   \lambda^3 \angles{ \sqrt{\beta}  \lambda} \angles{  \lambda }^{-5/2}  \quad \text{for all} \ \xi \in [1/2,2].
\end{equation}

Now we prove \eqref{Iest-1d} by dividing the  region of integration into sets of non-stationary contribution:
 $\{ \xi: \ \phi_\lambda'(\xi)\neq 0 \}$ and stationary contribution: $\{ \xi: \ \phi_\lambda'(\xi) =0 \}$.
\subsubsection{Non-stationary contribution} Since $m_\beta'$ is positive, the non-stationary contribution occurs if either
$$
 x\ge 0 \quad  \text{or }
\quad 0<-x \ll  \angles{ \sqrt{\beta}  \lambda}  \angles{\lambda}^{-\frac12 }t \quad   \text{or } -x \gg \angles{ \sqrt{\beta}  \lambda}  \angles{\lambda}^{-\frac12 }t
$$
In this case we have
$$
|\phi_{\lambda}'(\xi)|\gtrsim   \lambda \angles{ \sqrt{\beta}  \lambda}  \angles{\lambda}^{-\frac12 } \quad \text{for all} \ \xi \in [1/2,2],
$$
where Lemma \ref{lm-mest} is also used.
Combining this estimate with \eqref{Est-phi''-1}
we get 
\begin{equation*}
\max_{ 1/2 \le \xi \le 1 }\Bigabs {\partial_\xi \left(  \frac 1{  \phi'_{\lambda}(\xi)  } \right)}  \lesssim \lambda  \angles{ \sqrt \beta \lambda}^{-1} \angles{\lambda}^{-\frac 32}  .
\end{equation*} 
Now this estimate can be combined with Lemma \ref{lm-corput1} for $N=1$ to estimate $I_{\lambda} (x, t) $ as
\begin{equation}\label{Iest1-1d}
\begin{split}
| I^+_{\lambda} (x, t) |
&\lesssim \lambda \cdot \lambda  \angles{\sqrt \beta \lambda}^{-1}\angles{\lambda}^{-\frac32}   t^{-1} 
\\
&\lesssim   \lambda^{-\frac 12}  \angles{\sqrt \beta \lambda}^{-\frac12 }\angles{\lambda}^{\frac 54}   t^{-\frac12} 
\end{split}
\end{equation}
where to get the second line we used \eqref{tcond}.

\subsubsection{ Stationary contribution }\label{1Dstationary} 
This occurs if 
$$
  0<-x \sim \angles{ \sqrt{\beta}  \lambda}  \angles{\lambda}^{-\frac12 }t.
$$
In this case we use Lemma \ref{lm-corput} and \eqref{Est-phi''-1} to obtain 
\begin{equation}\label{Iest-1dst}
\begin{split}
|I^+_{\lambda} (x, t)| &\lesssim \lambda \cdot  \left( \lambda^3 \angles{ \sqrt{\beta}  \lambda} \angles{  \lambda }^{-5/2}   t \right)^{-\frac 12}  \left[ |\rho(2)| + \int_{1/2}^2 |\rho'(\xi)| \, d\xi \right] 
\\
&\lesssim  \lambda^{- \frac 12} \angles{ \sqrt{\beta}  \lambda}^{- \frac 12} \angles{  \lambda }^{\frac 54}    t ^{- \frac 12} .
\end{split}
\end{equation}

\subsection{Proof of \eqref{Iest-1} when $d\ge 2$}

To prove \eqref{Iest-1} first observe that $I_{\lambda} (x, t)$ is radially symmetric w.r.t $x$, as it is 
 the inverse Fourier transform of the radial function $e^{it {m_\beta}(\lambda \xi)}  \rho(\xi) $. So we can write (see \cite[ B.5]{G08})
\begin{equation}\label{I-eq}
I_{\lambda} (x, t) =\lambda^d \int_{1/2}^2  e^{it m_\beta(\lambda r) } (\lambda r|x|)^{-\frac{d-2}{2}}  J_{\frac{d-2}{2}}( \lambda r |x|)   r^{d-1} \rho(r) \, dr,
\end{equation}
where $J_\alpha(r)$ is the Bessel function:
$$
J_\alpha(r)=\frac{ (r/2)^\alpha}{\Gamma(\alpha+1/2) \sqrt{\pi}} \int_{-1}^1  e^{ir s} \left(1-s^2\right)^{\alpha-1/2} \, ds \quad \text{for} \ \alpha>-1/2.
$$
  The Bessel function $J_\alpha(r)$ satisfies the following properties for $\alpha>-1/2$ and $r>0$ (See   \cite[Appendix B]{G08} and \cite{S93}):
\begin{align}
\label{Jm1}
J_\alpha (r) &\le Cr^{\alpha} ,
\\
\label{Jm2}
J_\alpha(r)& \le C r^{-1/2} ,
\\
\label{Jm3}
\partial_r \left[ r^{-\alpha} J_\alpha(r)\right] &= -r^{-\alpha} J_{\alpha+1}(r)
\end{align}
Moreover, it is
known that (see \cite[Chapter 1, Eq. (1.5)]{J81}),
\begin{equation}
\label{J0est}
r^{- \frac{d-2}2 }J_{ \frac{d-2}2}(s)= e^{is} h(s)  +e^{-is}\bar h(s)
\end{equation}
for some function $h$ satisfying the decay estimate 
\begin{equation}
\label{h-est}
| \partial_r ^k h(r)|\le C_k \angles{r}^{-\frac{d-1}2-k}  \qquad  ( k\ge 0).
\end{equation}

We use the short hand
$$ 
 m_{\beta, \lambda}(r) = m_\beta(\lambda r),  \qquad \tilde J_a(r)= r^{-a} J_a(r), \qquad \tilde\rho(r)=r^{d-1} \rho(r).$$
Hence
\begin{equation}\label{I-eqq}
I_{\lambda} (x, t) = \lambda^d  \int_{1/2}^2  e^{it  m_{\beta, \lambda}(r)}  \tilde J_{\frac{d-2}{2}}( \lambda r |x|)  \tilde  \rho(r) \, dr,
\end{equation}

We prove \eqref{Iest-1} by treating the cases $  |x|\lesssim \lambda^{-1}$ and $  |x|\gg  \lambda^{-1}$ separately. 
\subsubsection{Case 1: $  |x|\lesssim  \lambda^{-1}$}
By \eqref{Jm1} and \eqref{Jm3} we have for all $ r\in (1/2, 2)$ the estimate
\begin{equation}
\label{J0derv-est}
\left| \partial_r ^k \left[  \tilde J_{ \frac{d-2}2 }( \lambda r |x|)  \tilde\rho(r) \right]\right| \underset{k}  \lesssim 1  \qquad  ( k\ge 0).
\end{equation}
From Corollary \ref{cor-m} we have 
\begin{equation}\label{mlamb-invest}
\max_{ 1/2 \le r \le 1 }\Bigabs {\partial_r ^k \left(  \frac 1{  m'_{\beta, \lambda}(r)  } \right)} \underset{k}  \lesssim \lambda^{-1}   \angles{ \sqrt \beta \lambda}^{-1} \angles{\lambda}^{\frac12}  \qquad   (k \ge 0).
\end{equation} 

Applying Lemma \ref{lm-corput1} with \eqref{J0derv-est}-\eqref{mlamb-invest} and $N=  d/2$
to \eqref{I-eqq} we obtain
\begin{equation}\label{Iest-2}
\begin{split}
| I_{\lambda} (x, t) |
&\lesssim \lambda^d \cdot \left( \lambda^{-1}   \angles{\sqrt \beta \lambda}^{-1}\angles{\lambda}^{\frac12}   \right)^{  d/2} t^{-   d/2} 
\\
&\lesssim    c_{\beta,d} (\lambda)  t^{-\frac d2} ,
\end{split}
\end{equation}
where to get the second line we used \eqref{tcond}.

\subsubsection{Case 2: $  |x|\gg  \lambda^{-1}$ }
Using \eqref{J0est} in \eqref{I-eqq} we write
\begin{align*}
 I_{\lambda} (x, t) 
 &= \lambda^d  \left[\int_{1/2}^2  e^{it \phi^+_{\lambda} (r)  }  h(\lambda r |x|)  \tilde \rho(r) \, dr +  \int_{1/2}^2  e^{-it \phi^-_{\lambda} (r)  } \bar  h(\lambda r |x|)  \tilde\rho(r) \, dr \right],
\end{align*}
where 
$$
\phi^\pm_{\lambda} (r)=    \lambda r|x|/t  \pm   m_{\beta, \lambda}(r) .
$$
Set $H_{\lambda}( |x|, r) :=h(\lambda r |x|)  \tilde \rho(r)$. In view of \eqref{h-est} we have 
\begin{equation}
\label{Hest}
 \max_{1/2 \le r\le 2}\Bigabs {\partial_r ^k  H_{\lambda}( |x|, r)  }    \lesssim   (\lambda |x|)^{-\frac{d-1}2}  \qquad  ( k\ge 0),
\end{equation}
 where we also used the fact $\lambda |x|\gg 1$.

Now 
we write 
$$
 I_{\lambda} (x, t) 
=  I^+_{\lambda} (x, t) 
+  I^-_{\lambda} (x, t) ,
$$
where
\begin{align*}
 I^+_{\lambda} (x, t) 
 &= \lambda^d  \int_{1/2}^2  e^{it \phi^+_{\lambda} (r)  } H_{\lambda}( |x|, r)  \, dr ,
 \\
I^-_{\lambda} (x, t) &= \lambda^d 
   \int_{1/2}^2  e^{-it \phi^-_{\lambda} (r)  }\bar H_{\lambda}( |x|, r)  \, dr .
\end{align*}
Observe that
$$
\partial_r \phi^\pm_{\lambda} (r)=  \lambda  \left[ |x|/t \pm  m_\beta'(\lambda r) \right],\qquad \partial_r^2\phi^\pm_{\lambda} (r)=     \pm  \lambda^2 m_\beta''(\lambda r),
$$
and hence by Lemma \ref{lm-mest}, 
\begin{equation}
\label{phi'+:est}
|\partial_r \phi^+_{\lambda} (r)|\gtrsim  \lambda\angles{ \sqrt \beta \lambda} \angles{ \lambda }^{-1/2},
\qquad
|\partial^2_r \phi^\pm_{\lambda} (r)| \sim   \lambda^3 \angles{\sqrt \beta \lambda} \angles{   \lambda }^{-5/2}
\end{equation}
for all $ r\in (1/2, 2)$, where we also used the fact that $m_\beta'$ is positive.

\subsubsection*{\underline{Estimate for  $I^+_{\lambda} (x, t)$ } }
 Following as in the proof of  Corollary \ref{cor-m}, we have \begin{equation}
\label{Est-phi+'-1}
\max_{1/2 \le r \le 2}\Bigabs {\partial_r ^k \left( \left[ \partial_r \phi_\lambda^+ (r) \right]^{-1} \right)}  \underset{k} \lesssim  \lambda^{-1}   \angles{\sqrt{\beta} \lambda}^{-1}\angles{\lambda}^{\frac12}   \qquad   (k \ge 0).
\end{equation} 

Applying Lemma \ref{lm-corput1} with \eqref{Hest}, \eqref{Est-phi+'-1}  and $N=  d/2$
to $I^+_{\lambda} (x, t) $ we obtain
\begin{equation}\label{Iest-3}
\begin{split}
| I^+_{\lambda} (x, t) |
&
\lesssim  \lambda^d  \cdot  (\lambda |x|)^{-\frac{d-1}2}  \cdot \left( \lambda^{-1}   \angles{\sqrt{\beta} \lambda}^{-1}\angles{\lambda}^{\frac12}  \right)^{  d/2} t^{- d/2}  
\\
&\lesssim   c_{\beta,d} (\lambda)   t^{-\frac d2} ,
\end{split}
\end{equation}
where to get the second line we used the fact that  $\lambda |x|\gg 1$, and the condition in \eqref{tcond}.

\subsubsection*{\underline{Estimate for $I^-_{\lambda} (x, t)$}}
We treat the the non-stationary and stationary cases separately. In the non-stationary
case, where 
$$ |x | \ll   \angles{\beta \lambda}   \angles{  \lambda }^{-1/2} t \quad \text{or} \quad |x| \gg   \angles{\beta \lambda}  \angles{ \lambda } ^{-1/2} t, $$ we have 
$$
|\partial_r \phi^-_{\lambda} (r)|\gtrsim  \lambda\angles{\sqrt{\beta} \lambda} \angles{  \lambda }^{-1/2},
$$
and hence $I^-_{\lambda} (x, t)$ can be estimated in exactly the same way as $I^+_{\lambda} (x, t)$ above, and satisfies the same bound as in \eqref{Iest-3}.

So it remains to treat the stationary case: 
$$
|x | \sim \angles{\sqrt{\beta} \lambda}  \angles{ \lambda }^{-1/2} t.
$$
 In this case,  
we use Lemma \ref{lm-corput}, \eqref{phi'+:est} and\eqref{Hest} to obtain 
\begin{equation}\label{Iest-station}
\begin{split}
| I^-_{\lambda} (x, t) 
&\lesssim  \lambda^d  \cdot \left(  \lambda^3 \angles{\sqrt{\beta} \lambda}  \angles{   \lambda }^{-5/2}  t \right)^{-\frac12}\left[ |H_\lambda^- (x, 2) |+ \int_{1/2}^2 | \partial_r  H_\lambda^- (x, r)| \, dr\right]
\\
&\lesssim   \lambda^{d-\frac 32 } \angles{\sqrt{\beta} \lambda}^{-\frac12}  \angles{ \lambda}^{\frac54}   t^{-\frac12} \cdot  (\lambda |x|)^{-\frac{d-1}2}
\\
& \lesssim     c_{\beta,d} (\lambda)   t^{-\frac d2}  ,
\end{split}
\end{equation}
where we also used the fact that $H_\lambda^- (x, 2) =0$ and $|x | \sim \angles{ \sqrt{\beta} \lambda} \angles{ \lambda }^{-1/2} t$.

 \section{Proof of Theorem \ref{thm-Str} }

 We shall use the Hardy-Littlewood-Sobolev inequality which
asserts that
\begin{equation}
\label{HLSineq}
\norm{|\cdot |^{-\gamma}\ast f}_{L^a(\R)} \lesssim \ \norm{ f}_{L^b(\R)} 
\end{equation}
whenever $1 < b < a < \infty$ and $0 < \gamma< 1$ obey the scaling condition
$$
\frac1b=\frac1a +1-\gamma.
$$

We prove only \eqref{Strest1d} since \eqref{Strest1d-inh}  follows from \eqref{Strest1d}  by the standard $TT^*$--argument.
First note that \eqref{Strest1d} holds true for the pair $(q, r)=(\infty, 2)$ 
as this is just the energy inequality.  So we may assume $2< q < \infty$ and $r>2$.

Let $q'$ and $r'$ be the conjugates of $q$ and $r$, respectively, i.e., $q'=\frac q{q-1}$ and  $r'=\frac r{r-1}$.
 By the standard $TT^*$--argument, \eqref{Strest1d} is equivalent to the estimate 
\begin{equation}
\label{TTstar}
\norm{ TT^\ast F }_{L^{q}_{t} L^{r}_{ x} (\R^{d+1}) } \lesssim \left[ c_{\beta,d} (\lambda) \right]^{ 1 -\frac 2r } 
\norm{ F  }_{ L^{q'}_{ t} L_x^{r'}(\R^{d+1} )},
\end{equation}
where 
\begin{equation}\label{TTastF}
\begin{split}
 TT^\ast F (x, t)&= \int_{\R^d}  \int_\R e^{i  x  \xi- i(t-s) {m}(  \xi)}  \rho^2_\lambda (\xi)   \widehat{F}( \xi, s)\, ds  d\xi
 \\
 &= \int_\R  K_{\lambda,  t-s} \ast F( \cdot,  s) \, ds
 \end{split}
\end{equation}
with
\begin{align*}
 K_{\lambda ,t}(x)&= \int_{\R^d}  e^{i  x  \xi-it  {m}(  \xi)}  \rho^2_\lambda (\xi)   \, d\xi.
\end{align*}
Since $$K_{\lambda, t} \ast g (x)=e^{ it m(D)}  P_\lambda g_\lambda (x)$$  
 it follows from \eqref{dispest} that
\begin{equation}\label{kest1}
\|K_{\lambda, t} \ast g \|_{L_x^{\infty}(\R^d)} \lesssim c_{\beta,d} (\lambda)  |t|^{-\frac d2}   \|g\|_{L_x^{1}(\R^d)}.
\end{equation}
On the other hand, we have by Plancherel 
\begin{equation}\label{kest2}
\|K_{\lambda, t} \ast g \|_{L_x^{2}(\R^d)} \lesssim   \|g\|_{L_x^{2}(\R^d)}.
\end{equation}
So interpolation between \eqref{kest1} and \eqref{kest2} yields
\begin{equation}\label{kest3}
\|K_{\lambda, t} \ast g \|_{L_x^{r}(\R^d)} \lesssim  \left[ c_{\beta,d} (\lambda)  \right]^{1- \frac2r }  |t|^{- d\left(\frac12-\frac 1r\right)} \|g\|_{L_x^{r'}(\R^d)}
\end{equation}
 for all $  r \in[2, \infty].$

Applying Minkowski's inequality to \eqref{TTastF}, and then use \eqref{kest3} and  \eqref{HLSineq}
with 
$$
(a, b)=(q , q' ), \qquad \gamma= \frac d2-\frac dr=\frac 2q$$
 we obtain
 \begin{align*}
\norm{TT^\ast F }_{L^{q}_{t} L^{r}_{ x} (\R^{d+1})}
&\le \norm{   \int_\R \norm{ K_{\lambda, t-s,} \ast
   F(s, \cdot) }_{L_x^r (\R^d)}  \, ds}_{L^{q}_t(\R)}
  \\
 &\lesssim   \left[ c_{\beta,d} (\lambda)  \right]^{1- \frac2r }   \norm{  \int_\R  |t-s|^{- d\left(\frac12-\frac 1r\right)} 
  \norm{ F(s, \cdot) }_{ L_x^{r'}(\R^d)}  \, ds }_{L_t^{q}(\R)}
   \\
 &\lesssim   \left[ c_{\beta,d} (\lambda)  \right]^{1- \frac2r }  \norm{  
  \norm{ F }_{L_x^{r'}(\R^d) }  }_{L^{q'}_{ t} (\R)}
    \\
 &=    \left[ c_{\beta,d} (\lambda)  \right]^{1- \frac2r } 
  \norm{ F  }_{ L^{q'}_{ t} L_x^{r'}(\R^{d+1})} \, ,
\end{align*}
which is the desired estimate \eqref{TTstar}.

 \section{Proof of Theorem \ref{theorem3d}}

We consider the system \eqref{wtbsq} with $\beta=0$ and a curl-free vector field $\mathbf v $, i.e., $\nabla \times \mathbf v =0 $.
 Observe that 
 $$L_0 (D)= \sqrt { K(D)}, \qquad m_0(D)= |D|\sqrt { K(D)}.$$
 The transformation \eqref{diag-wtsbq} (with $\beta=0$) yields 
$$
\eta=u_++u_-, \quad \mathbf v=-i \sqrt K \mathcal R (u_+- u_-),
$$
where we have used the fact that $\mathbf v $ is curl-free, in which case,
$$
\nabla (\nabla \cdot \mathbf v)= \Delta \mathbf v=-|D|^2 \mathbf v \quad \Rightarrow \quad \mathbf v= -\mathcal R( \mathcal R \cdot \mathbf v).
$$
Consequently,
the Cauchy problem \eqref{wtbsq}, \eqref{data-wtbsq} transforms to
\begin{equation}
\label{wt3d-transf}
\left\{
\begin{aligned}
 (i\partial_t\mp m_0(D) ) u_\pm
&= \mathcal B^\pm (u_+, u_-) ,
\\
u_\pm (0)&= f_\pm,
\end{aligned}
\right.
\end{equation}
where
\begin{equation}
\label{wtnonlin}
\begin{split}
\mathcal B^\pm (u_+, u_-)  &=   2^{-1}|D| K\mathcal R \cdot \left\{   (u_++u_-)    \mathcal R  \sqrt{K}(u_+ - u_-)\right\} 
\\
& \qquad \pm    4^{-1}|D|  \sqrt{K} \left|  \mathcal R \sqrt{K} (u_+ - u_-)\right|^2
\end{split}
\end{equation}
and
\begin{equation}\label{data-trswtsbq}
 f_\pm =\frac{ \sqrt K \eta_0 \mp i  \mathcal  R\cdot \mathbf v_0}{2\sqrt K }  \in H^s( \R^d).
\end{equation}

Thus, Theorem \ref{theorem3d} redues to the following:
\begin{theorem}
\label{theorem3d-reduc}
	Let $d\ge 3$ and $s>\frac d2-\frac 34$. If the initial data has size
	$$
		\sum_\pm \lVert f_\pm \rVert _{H^s}\le \mathcal D_0,
	$$
	then there exists a solution
	$$
		u_\pm \in C \left([0, T];
		H^s(\mathbb{R}^d)\times H^s(\mathbb{R}^d)
		\right)
$$
	of the Cauchy problem \eqref{wt3d-transf}--\eqref{data-trswtsbq} with existence time $T\sim \mathcal D_0^{-2}$.
	
	Moreover, the solution is unique in some subspace of 
	$ C \left([0, T];
		H^s(\mathbb{R}^d)\times H^s(\mathbb{R}^d)
		\right)$ and the solution depends continuously on the initial data.
\end{theorem}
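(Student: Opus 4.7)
The plan is to solve the transformed integral equation
\begin{align*}
u_\pm(t) = \mathcal{S}_{m_0}(\pm t) f_\pm \mp i \int_0^t \mathcal{S}_{m_0}(\pm(t-s)) \mathcal{B}^\pm(u_+, u_-)(s)\,ds
\end{align*}
by Picard iteration in a resolution space that couples the energy norm with a Strichartz norm tuned to absorb the derivative loss in \eqref{Strest1d}. Concretely, I would fix an admissible pair $(q,r)$ in the sense of \eqref{endtpt} with $q$ just above the endpoint $q = 2$, set $\alpha := 3/(2q)$ (the effective high-frequency loss exponent coming from $c_{0,d}(\lambda)\sim \lambda^{3d/4}$ when $\beta=0$), and work in
\begin{align*}
\|u\|_{Y^T_s} := \|u\|_{L^\infty_T H^s(\R^d)} + \|\angles{D}^{s-\alpha} u\|_{L^q_T L^r_x(\R^{d+1})}.
\end{align*}
A Littlewood--Paley square-function argument combined with \eqref{Strest1d}--\eqref{Strest1d-inh} then yields the homogeneous bound $\|\mathcal{S}_{m_0}(\pm t) f\|_{Y^T_s} \lesssim \|f\|_{H^s}$ and its Duhamel counterpart dominated by $\|F\|_{L^1_T H^s}$, so the linear pieces of the iteration map are controlled by $\mathcal{D}_0$.

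The heart of the argument is the bilinear estimate
\begin{align*}
\|\mathcal{B}^\pm(u, v)\|_{L^1_T H^s} \lesssim T^{\delta} \|u\|_{Y^T_s} \|v\|_{Y^T_s}
\end{align*}
for some $\delta > 0$. I would establish this via a Littlewood--Paley trichotomy (high-high, high-low, low-high) combined with a fractional Leibniz / Kato--Ponce rule, exploiting the structural features of \eqref{wtnonlin}: at high frequencies $|D|K\mathcal R$ is of order $0$, $|D|\sqrt K$ is of order $1/2$, and $\mathcal R\sqrt K$ is of order $-1/2$, so both summands in \eqref{wtnonlin} behave schematically like $|D|^{1/2}(w_1 w_2)$ with each $w_j$ gaining half a derivative from $\mathcal R\sqrt K$. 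The resulting product estimate is closed by placing one factor in $L^\infty_x$ -- controlled via Sobolev embedding $W^{\sigma, r}_x \hookrightarrow L^\infty_x$, $\sigma > d/r$, applied to $|D|^{-1/2}u$ and majorised by the Strichartz component of $Y^T_s$ -- and the other factor in $H^s_x$. The embedding condition $s - \alpha + 1/2 > d/r = d/2 - 2/q$ simplifies to $s > d/2 - 1/(2q) - 1/2$, which tends to $d/2 - 3/4$ as $q \to 2^+$, matching exactly the hypothesis of the theorem. A Hölder in time then produces the factor $T^{\delta}$ with $\delta = 1 - 1/q$ just below $1/2$.

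With the linear and bilinear estimates in hand, a standard contraction on a ball of radius $\sim \mathcal D_0$ in $Y^T_s \times Y^T_s$ produces the solution on a time interval of length $T \sim \mathcal{D}_0^{-1/\delta} = \mathcal{D}_0^{-2-}$, as claimed; uniqueness in the corresponding subspace and Lipschitz continuous dependence on the data follow by re-running the same bilinear estimate on differences. The main obstacle I foresee is the bilinear estimate at this threshold regularity: one must handle the low-frequency contributions (where the Fourier symbols $|D|\sqrt K$ and $\mathcal R\sqrt K$ are no longer of order $\pm 1/2$ but of order $1$ and $0$, respectively, so that the smoothing mechanism is different) and one must also verify that the curl-free condition $\nabla\times\mathbf{v} = 0$ is preserved under the flow of \eqref{wtbsq} -- this is what allowed the reduction $\mathbf v = -\mathcal R(\mathcal R\cdot \mathbf v)$ in the first place, so the iteration must be set up to respect this constraint at every step.
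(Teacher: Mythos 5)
Your proposal follows essentially the same route as the paper: Picard iteration for the diagonalized system in a space that couples $L^\infty_T H^s$ with a derivative-compensated Strichartz norm, with the loss exponent $\alpha=3/(2q)$ read off from $c_{0,d}(\lambda)^{2/(qd)}$, and a Littlewood--Paley trichotomy to prove the bilinear bound, yielding the threshold $s>d/2-3/4$ and $T\sim\mathcal D_0^{-2-}$ as $q\to 2^+$. The one cosmetic difference is the resolution space: you use a Sobolev-type norm $\|\angles{D}^{s-\alpha}u\|_{L^q_T L^r_x}$, whereas the paper works in a Besov-type space $X^s_T$ built as an $\ell^2_\lambda$ sum of block norms $\norm{P_\lambda u}_{L^\infty_T L^2_x}+\angles{\lambda}^{-3/(2q)}\norm{P_\lambda u}_{L^q_T L^r_x}$; since $q,r\ge 2$, Minkowski makes $X^s_T$ the slightly larger norm, and the paper's block-wise bilinear estimate (Lemma \ref{lm-biest}) plus dyadic summation over the regions $\Lambda_0,\Lambda_1,\Lambda_2$ is cleaner than invoking Kato--Ponce, though the exponent bookkeeping comes out the same. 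On your final concern: the curl-free constraint is preserved automatically and need not be tracked through the iteration, because the solution is reconstructed from the fixed point $(u_+,u_-)$ by $\mathbf v=-i\sqrt K\,\mathcal R(u_+-u_-)$, and $\mathcal R f=|D|^{-1}\nabla f$ is a gradient, hence curl-free for any scalar $f$.
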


\subsection{Reduction of Theorem \ref{theorem3d-reduc} to bilinear estimates}

The bilinear terms in \eqref{wtnonlin} can be written as 
 \begin{equation*}
\begin{split}
\mathcal B^\pm(u_+, u_-)  &=   \frac12 \sum_{\pm_1, \pm_2} \pm_2 D K \mathcal R \cdot  \left( u_{\pm_1}  \mathcal R   \sqrt{K}u_{\pm_2} \right) 
\\
& \qquad \qquad \pm  \frac14 \sum_{\pm_1, \pm_2}  (\pm_1) (\pm_2) D \sqrt{K }  \left( \mathcal R   \sqrt{K }   u_{\pm_1}   \cdot \mathcal R  \sqrt{K} u_{\pm_2} \right) ,
\end{split}
\end{equation*}
where $\pm_1$ and $\pm_2$ are independent signs.
Then the Duhamel's representation of \eqref{wt3d-transf} is given by
\begin{equation}
\label{uinteq}
\begin{split}
  u_\pm(t) &= \mathcal S_{m_0}(\pm t) f_\pm \mp \frac{ i }2 \sum_{\pm_1, \pm_2}  (\pm_2)\mathcal B_1^\pm (u_{\pm_1},  u_{\pm_2})(t) 
  \\
  & \qquad \qquad \qquad \mp \frac{ i  }4   \sum_{\pm_1, \pm_2}   (\pm_1) (\pm_2) \mathcal B_2^\pm(u_{\pm_1},  u_{\pm_2})(t),
  \end{split}
   \end{equation}
 where $\mathcal S_{m_0}(t)= e^{itm_0(D)}$ and
 \begin{equation}
\label{AB} 
 \begin{split}
\mathcal B_1^\pm (u,v)(t)&:=\int_{0}^{t}    \mathcal S_{m_0}( \pm(t-t') D K \mathcal R \cdot \left( u  \mathcal R \sqrt{K} v \right)(t') \,dt' ,
\\
\mathcal  B_2^\pm (u,v)(t)&:= \int_{0}^{t} \mathcal S_{m_0}( \pm(t-t') D \sqrt{K} \left(   \mathcal R  \sqrt{K} u  \cdot   \mathcal R  \sqrt{K} v \right)(t') \,dt'  .
\end{split}
\end{equation}

\vspace{2mm}

Setting $\beta=0$ in Theorem \ref{thm-Str} we obtain the following.
\begin{corollary}\label{cor-str}
Let $\lambda\in 2^\Z$ and $d\ge 2$. Assume that the pair $(q,r)$ satisfies 
$$
  2< q\le  \infty,  \quad 2\le r \le  \infty, \qquad \frac 2q+ \frac dr = \frac d2.
$$
Then 
 \begin{align}
\label{Strest1d0}
\norm{ \mathcal S_{m_0}(\pm t)  f_{\lambda}}_{ L^{q}_{t} L^{r}_{ x} (\R^{d+1}) } \lesssim  \angles{\lambda}^{\frac{3}{2q}} 
\norm{  f_{\lambda}}_{ L^2_{ x}(\R^d )} ,
\\
\label{Strest1d0-inh}
\norm{ \int_0^t  \mathcal S_{m_0} (\pm(t-s) )F_\lambda (s) \, ds}_{ L^{q}_{t} L^{r}_{ x} (\R^{d+1}) } \lesssim    \angles{\lambda}^{\frac{3}{2q}} 
\norm{ F_{\lambda}}_{ L_t^1L^2_{ x}(\R^{d+1} )} 
\end{align}
for all  $f \in \mathcal{S}(\R^d)$ and  $F \in \mathcal{S}(\R^{d+1})$.

\end{corollary}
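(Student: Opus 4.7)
The plan is to deduce the corollary as an immediate consequence of Theorem \ref{thm-Str} at $\beta = 0$, followed by a short algebraic simplification of the frequency-dependent constant. First I would specialize \eqref{cbeta}: since $\angles{\sqrt{\beta}\lambda} = 1$ when $\beta = 0$, the constant reduces to
$$c_{0,d}(\lambda) = \lambda^{d/2-1}\angles{\lambda}^{d/4+1}.$$
Plugging this into the two conclusions of Theorem \ref{thm-Str} produces \eqref{Strest1d0} and \eqref{Strest1d0-inh} but with the weight $[c_{0,d}(\lambda)]^{2/(qd)}$ in place of $\angles{\lambda}^{3/(2q)}$. Hence the only remaining task is to verify the elementary inequality
$$\lambda^{(d-2)/(qd)}\angles{\lambda}^{(d+4)/(2qd)} \lesssim \angles{\lambda}^{3/(2q)}.$$

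I would check this by splitting into the two dyadic regimes. In the high-frequency case $\lambda \gtrsim 1$, one has $\lambda \sim \angles{\lambda}$ and the two exponents on the left collapse to $\frac{2(d-2)+(d+4)}{2qd} = \frac{3}{2q}$, so equality holds up to constants. In the low-frequency case $\lambda \lesssim 1$, $\angles{\lambda}$ is bounded, and $\lambda^{(d-2)/(qd)}$ is also bounded by $1$ since its exponent is non-negative under the hypothesis $d \ge 2$; both factors are therefore $O(1)$ and the inequality is trivial. Note that the restriction $d \ge 2$ in the corollary (as opposed to $d \ge 1$ in Theorem \ref{thm-Str}) is precisely what prevents a low-frequency blow-up of the factor $\lambda^{(d-2)/(qd)}$.

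Since the substantive work — the dispersive decay \eqref{dispest}, the $TT^\ast$ argument, and the Hardy--Littlewood--Sobolev step — has already been carried out in the proof of Theorem \ref{thm-Str}, no serious obstacle remains; the entire argument is bookkeeping of exponents. The endpoint case $q = \infty$, $r = 2$ is consistent since $\angles{\lambda}^{3/(2q)} = 1$ there and the bound reduces to the energy identity via Plancherel, and the intermediate cases $2 < q < \infty$ are exactly those covered by the $TT^\ast$/Hardy--Littlewood--Sobolev step inside Theorem \ref{thm-Str}.
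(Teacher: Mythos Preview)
Your proposal is correct and follows exactly the paper's approach: the paper simply states that the corollary is obtained by ``setting $\beta=0$ in Theorem~\ref{thm-Str}'', and your argument is precisely that specialization together with the explicit (and correct) verification that $[c_{0,d}(\lambda)]^{2/(qd)} \lesssim \angles{\lambda}^{3/(2q)}$, including the observation that the hypothesis $d\ge 2$ is what controls the low-frequency factor $\lambda^{(d-2)/(qd)}$.
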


Now,
define the contraction space, $X^s_T$ via the norm
$$
\norm{u}_{X^s_T}= \left[ \sum_{\lambda}  \angles{\lambda}^{2s}   \norm{u}^2_{X_\lambda} \right]^\frac 12,
$$
where
$$
\norm{u}_{X_\lambda} =
  \left[ \norm{  P_\lambda u}^2_{ L_T^{\infty}L_x^{2} } + \angles{ \lambda }^{- \frac 3{q}}   \norm{ P_\lambda u}^2_{ L_T^{q} L_x^{r}  }  \right]^\frac12
$$
with 
\begin{equation}\label{qr}
2<q< \infty ,  \qquad  r = \frac {2qd}{qd-4}, \quad d\ge 2. 
\end{equation}
In view of Corollary \ref{cor-str} the pair $(q, r)$ satisfying \eqref{qr}
is an admissible pair.

Observe that
\begin{equation}
\label{Xlam-est}
\norm{ P_\lambda  u}_{ L_T^{\infty}L_x^{2} }  \le \norm{  u}_{X_\lambda} ,
\qquad
\norm{  P_\lambda u}_{L_T^{2} L_x^{r }  }  \le   \angles{ \lambda }^{ \frac 3{2q}}     \norm{  u}_{X_\lambda} .
\end{equation}
Moreover,
$$
  X^s_T \subset L_T^\infty H^s.
$$

 We estimate the linear part of \eqref{uinteq} using 
 \eqref{Strest1d0} as
 \begin{equation}
\label{homest} 
\begin{split}
 \norm{\mathcal S_{m_0}(\pm t)  f_\pm}_{X^s_T} &= \left[ \sum_{\lambda }  \angles{ \lambda}^{2s}   \norm{  \mathcal S_{m_0}(\pm t)  f_\pm }^2_{ X_\lambda }  \right]^\frac 12
 \\
 &\lesssim  \left[ \sum_{\lambda} \angles{ \lambda}^{2s} \norm{  P_\lambda  f_\pm}^2_{ L_x^{2} }  \right]^\frac 12
 \\
 & \sim \norm{   f_\pm}_{ H^{s} } .
\end{split}
\end{equation} 
So Theorem \ref{theorem3d-reduc} reduces to proving bilinear estimates on the terms $ \mathcal B_1^\pm(u,v)$ and $ \mathcal B_2^\pm(u,v) $ that are defined in \eqref{AB}. The proof of the following Lemma will be given in the next section.
\begin{lemma}\label{lm-keybiest}
Let $d\ge 2 \ $, $s>\frac d2-\frac 12- \frac 1{2q}$ for $q>2$, and $T>0$. Then
\begin{align}
\label{Keybiest1}
 \norm{ \mathcal B_1^\pm (u,v) }_{X^s_T} &\lesssim  T^{1-\frac1q}  \norm{u}_{X^s_T}  \norm{v}_{X^s_T}  ,
 \\
 \label{Keybiest2}
 \norm{ \mathcal B_2^\pm (u,v)   }_{X^s_T} &\lesssim  T^{1-\frac1q}   \norm{u}_{X^s_T}  \norm{v}_{X^s_T} 
\end{align}
 for
all $u, \ v \in X^s_T$, where $ \mathcal B_1^\pm$ and $ \mathcal B_2^\pm$ are as in \eqref{AB}.

\end{lemma}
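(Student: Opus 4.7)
I would first invoke the inhomogeneous Strichartz bound \eqref{Strest1d0-inh} to reduce the bilinear estimates to a product estimate on the nonlinear source terms
$$
F_1(u,v) := |D|K\mathcal R\cdot(u\,\mathcal R\sqrt{K}v), \qquad F_2(u,v) := |D|\sqrt{K}\bigl(\mathcal R\sqrt{K}u\cdot \mathcal R\sqrt{K}v\bigr),
$$
and then analyse the product by a Littlewood--Paley paraproduct decomposition. Applying \eqref{Strest1d0-inh} together with the energy inequality dyadically gives $\norm{P_\lambda \mathcal B_j^\pm(u,v)}_{X_\lambda} \lesssim \norm{P_\lambda F_j(u,v)}_{L_T^1 L_x^2}$, so the task becomes proving, for $j \in \{1,2\}$,
$$
\Bigl[\sum_\lambda \angles{\lambda}^{2s}\norm{P_\lambda F_j(u,v)}_{L_T^1 L_x^2}^2\Bigr]^{1/2} \lesssim T^{1-1/q}\norm{u}_{X_T^s}\norm{v}_{X_T^s}.
$$

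\textbf{Paraproduct and per-piece bound.} Hölder in time replaces $L_T^1$ by $T^{1-1/q}L_T^q$. Decomposing $u=\sum_\mu u_\mu$ and $v=\sum_\nu v_\nu$, the support restriction on $P_\lambda(u_\mu v_\nu)$ leaves three frequency regimes: high-high ($\mu\sim\nu\gtrsim\lambda$), high-low ($\mu\sim\lambda\gg\nu$), and low-high ($\nu\sim\lambda\gg\mu$). The Fourier symbols of $|D|K\mathcal R$, $\mathcal R\sqrt{K}$, and $|D|\sqrt{K}$ are bounded uniformly by $|\xi|\angles{|\xi|}^{-1}$, $\angles{|\xi|}^{-1/2}$, and $|\xi|\angles{|\xi|}^{-1/2}$ respectively; these give clean $L^p\to L^p$ bounds on each Littlewood--Paley block with an explicit dyadic weight. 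For each pair $(u_\mu,v_\nu)$ I would then apply Hölder in space with $\tfrac12 = \tfrac1r + \tfrac{r-2}{2r}$, placing one factor in $L^q_T L^r_x$ (controlled via \eqref{Xlam-est} by $\angles{\mu}^{3/(2q)}\norm{u}_{X_\mu}$) and the other in $L^\infty_T L^{2r/(r-2)}_x$; Bernstein gives $\norm{v_\nu}_{L^{2r/(r-2)}_x}\lesssim \nu^{d/r}\norm{v_\nu}_{L^2_x}$, and the admissibility relation $\tfrac dr = \tfrac d2 - \tfrac 2q$ turns this into the factor $\nu^{d/2-2/q}\norm{v}_{X_\nu}$.

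\textbf{Critical regime and dyadic summation.} The worst case is the high-high regime for $F_2$ with $\mu\sim\nu\gtrsim\lambda\gtrsim 1$: here the symbol loss is $\sqrt{\lambda}$ from $|D|\sqrt{K}$ together with $\mu^{-1/2}\nu^{-1/2}$ from the two copies of $\mathcal R\sqrt{K}$, so combining with the Strichartz weight $\mu^{3/(2q)}$ and the Bernstein weight $\mu^{d/r}$ yields a per-piece bound of order $\sqrt{\lambda}\,\mu^{d/2-1-1/(2q)}\norm{u_\mu}_{X_\mu}\norm{v_\mu}_{X_\mu}$, which is dominated by $\mu^{d/2-1/2-1/(2q)}\norm{u_\mu}_{X_\mu}\norm{v_\mu}_{X_\mu}$. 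The nonlinearity $F_1$ yields the same effective order, and all other regimes (high-low, low-high, and any piece involving a low-frequency factor) are strictly better and summable in $\ell^2_\lambda$. Setting $\alpha:=\tfrac d2-\tfrac 12-\tfrac 1{2q}$, the dyadic sum can be written as $\sum_{\mu\gtrsim\lambda}(\lambda/\mu)^{s+1/2}\mu^{s+\alpha}\norm{u_\mu}_{X_\mu}\norm{v_\mu}_{X_\mu}$; applying Schur's test on $2^{\Z}$ (the kernel $(\lambda/\mu)^{s+1/2}$ has bounded $\ell^1$ mass when $s+\tfrac12>0$) together with the elementary bound $\norm{v_\mu}_{X_\mu}\lesssim \angles{\mu}^{-s}\norm{v}_{X_T^s}$ closes the estimate provided $s>\alpha$.

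\textbf{Main obstacle.} The delicate part is reaching the sharp threshold $s>\tfrac d2-\tfrac12-\tfrac1{2q}$: one must balance the Strichartz gain $\angles{\mu}^{3/(2q)}$ hidden in the $X_\mu$ norm against both the Bernstein loss $\mu^{d/2-2/q}$ and the intrinsic half-derivative $\sqrt{\lambda}$ coming from $|D|\sqrt{K}$. A naive Sobolev product estimate using only the energy component of $X^s_T$ would yield at best $s>\tfrac d2-\tfrac12$, so one really has to exploit the Strichartz component of the norm. A secondary technicality is keeping the estimates uniform down to arbitrarily low frequencies, where the symbols of $K$, $\sqrt{K}$ and $|D|\sqrt{K}$ have asymptotics different from those at high frequencies; the Japanese brackets $\angles{\cdot}$ above absorb this uniformly.
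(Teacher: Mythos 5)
Your proposal follows essentially the same route as the paper: reduce to $\norm{P_{\lambda_0}F_j}_{L^1_T L^2_x}$ via the inhomogeneous Strichartz bound, prove a dyadic bilinear estimate by combining Hölder in time, Hölder in space, Bernstein, and the Strichartz component of the $X_\lambda$ norm (placed, crucially, on the lower-frequency factor), and then close the dyadic sum under the threshold $s>\tfrac d2-\tfrac12-\tfrac1{2q}$. The only substantive inaccuracy is the claim that the high-low and low-high regimes are ``strictly better'': the paper's computation of $\mathcal I_1,\mathcal I_2$ shows they saturate the same threshold (the sum $\sum_{\lambda}\angles{\lambda}^{d/2-1/2-1/(2q)}b_\lambda$ must converge), so they cost exactly $s>\tfrac d2-\tfrac12-\tfrac1{2q}$ as well -- but this does not affect the conclusion.
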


\subsection{Proof of Theorem \ref{theorem3d-reduc} }

Given that Lemma \ref{lm-keybiest} holds,
we solve the integral equations \eqref{uinteq} by contraction mapping techniques. We shall apply  Lemma \ref{lm-keybiest} with
$$ q= \frac 2{1-2\varepsilon}  \quad \text{for} \ \  0 < \varepsilon \ll 1.$$  Consequently, the exponent $s$ will be restricted to
$$
s>\frac d2- \frac34 + \frac \varepsilon 2.
$$

Define the mapping
\[
  (u_+, u_-) \mapsto \left (\Phi^+ (u_+, u_-), \ \Phi^-(u_+, u_-)\right),
\]
where $\Phi^\pm(u_+, u_-)$ is given by the right hand side of \eqref{uinteq}. 
Now given initial data with norm 
$$\sum_\pm \| f_\pm\|_{ H^s}\le \mathcal D_0 , $$ 
we look for a solution in the set
\[
  E_T = \left\{  (u_\pm  \in X^s_T
  \colon  \sum_\pm \| u_\pm\|_{ X^s_T} \le 2 C \mathcal D_0 \right\}.
\]

By \eqref{homest} and Lemma \ref{lm-keybiest} we have
\begin{align*}
\sum_\pm  \norm{\Phi^\pm(u_+, u_-)}_{ X^s_T} &\le C\sum_\pm  \norm{   f_\pm}_{ H^{s} } + C T^{\frac12+ \varepsilon}    \left(\sum_\pm \| u_\pm\|_{ X^s_T}  \right)^2
\\
&\le C \mathcal D_0 + C T^{\frac12+ \varepsilon}    ( 2C \mathcal D_0)^2
\\
&\le 2 C \mathcal D_0,
\end{align*}
provided that 
\begin{equation}\label{T}
T\le \left( 8 C^2 \mathcal D_0\right)^{- \frac 2{1+2\varepsilon}} .
\end{equation}

Similarly, for two pair of solutions $(u_+, u_-)$ and $(v_+, v_-)$ in $E_T$ with the same data, one can derive the difference estimate
\begin{align*}
 &\sum_\pm  \norm{ \Phi^\pm(u_+, u_-)- \Phi^\pm (v_+, v_-)}_{ X^s_T} 
 \\
 & \quad \le C T^{\frac12+ \varepsilon}   \left(\sum_\pm \| u_\pm\|_{ X^s_T}+ \| v_\pm\|_{ X^s_T}  \right) \left(\sum_\pm \| u_\pm- v_\pm\|_{ X^s_T}  \right)
  \\
 & \quad \le 4 C^2 T^{\frac12+ \varepsilon}   \mathcal D_0 \left(\sum_\pm \| u_\pm- v_\pm\|_{ X^s_T}  \right)
 \\
 & \quad \le \frac12   \left(\sum_\pm \| u_\pm- v_\pm\|_{ X^s_T}  \right),
\end{align*}
where in the last inequality we used \eqref{T}.

Therefore, $( \Phi^+, \Phi^-)$ is a contraction on $E_T$ and therefore it has a unique fixed point $(u_+, u_-) \in E_T$ solving the integral equation \eqref{uinteq}--\eqref{AB} on $\R^d\times [0, T]$, where $T \sim  \mathcal D_0^{-2-}$. Uniqueness in the space $X^s_T \times X^s_T$ and continuous dependence on the initial data can be shown in a similar way, by the difference estimates. This concludes the proof of Theorems \ref{theorem3d-reduc}.

\section{Proof of Lemma \ref{lm-keybiest} }

First we prove some bilinear estimates in Lemma \ref{lm-biest} below that will be crucial in the proof of Lemma  \ref{lm-keybiest}. To do so, we need the following Bernstein inequality, which is valid for $1\le a \le b \le \infty$
(see, for instance, \cite[Appendix A]{Tao}).
\begin{equation}\label{Bern-est0}
 \|  f_{\lambda} \|_{L^b_x}   \lesssim \lambda^{\frac da -\frac db} \|f_{\lambda}\|_{L^a_x},
\end{equation}
Moreover, we have for all $s_1, s_2 \in \R$ and $p\ge 1$,
\begin{equation}\label{Bern-est}
 \| |D|^{s_1} K^{s_2} \mathcal R f_{\lambda} \|_{L^p_x}   \lesssim  \lambda^{s_1} \angles{  \lambda}^{-s_2}  \|f_{\lambda}\|_{L^p_x},
\end{equation}
where we used $K(\xi) \sim  \angles{ \xi}^{-1} $.

\begin{lemma}
\label{lm-biest}
Let $d\ge 2$,  $q>2$,  $T>0$ and  $\lambda_j \in2^\Z$ ($j=0,1, 2)$. Then 
\begin{align}
\label{keybiles11}
\norm{ |D| K   P_{\lambda_0} 
 \mathcal R \cdot \left(   u_{\lambda_1} \mathcal R \sqrt  K v_{\lambda_2} \right) }_{L^1_T L^2_x}  
& \lesssim   \frac{  T^{1-\frac1q} \min\left( \angles{ \lambda_1},  \angles{ \lambda_2} \right)^{\frac d{2}-\frac1{2q} }}{ \angles{ \lambda_2} ^{\frac12} }
\norm{u}_{  X_{\lambda_1 } }
\norm{v}_{  X_{\lambda_2 } },
\\
\label{keybiles12}
\norm{ |D| \sqrt {K   }
 P_{\lambda_0}\left(   \mathcal R  \sqrt  K u_{\lambda_1}  \cdot \mathcal R \sqrt  K v_{\lambda_2} \right) }_{L^1_T L^2_x}  
& \lesssim   \frac{ T^{1-\frac1q}  \angles{ \lambda_0} ^{\frac12}
 \min\left( \angles{ \lambda_1},  \angles{ \lambda_2} \right)^{\frac d{2}-\frac1{2q} } }{\angles{ \lambda_1} ^{\frac12} \angles{ \lambda_2} ^{\frac12}}
\norm{u}_{  X_{\lambda_1 } }
\norm{v}_{  X_{\lambda_2 } }
\end{align}
for all $u\in X_{\lambda_1 }$ and $v \in X_{\lambda_2 }$.
\end{lemma}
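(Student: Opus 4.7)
\textbf{Proof proposal for Lemma \ref{lm-biest}.}
The plan is to combine Hölder inequalities in time and space, the Bernstein inequalities \eqref{Bern-est0} and \eqref{Bern-est} (the latter absorbs the pseudo-differential operators $|D|$, $K$, $\sqrt K$, $\mathcal R$ into scalar factors in $\lambda_0,\lambda_1,\lambda_2$), and the two ingredients encoded in the norm $\|\cdot\|_{X_\lambda}$: the energy bound $L^\infty_T L^2_x$ and the weighted Strichartz bound $L^q_T L^r_x$ with weight $\angles{\lambda}^{3/(2q)}$. The admissible pair satisfies $d/r = d/2 - 2/q$, so \eqref{Bern-est0} gives $\|u_{\lambda}\|_{L^\infty_x} \lesssim \lambda^{d/2-2/q}\|u_{\lambda}\|_{L^r_x}$.

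For \eqref{keybiles11}, I would first use \eqref{Bern-est} to extract the outer operator $|D|KP_{\lambda_0}\mathcal R\cdot$ as a scalar factor $\lambda_0\angles{\lambda_0}^{-1}$ on the $L^2_x$-norm, then apply Hölder in space $\|AB\|_{L^2_x}\le \|A\|_{L^p_x}\|B\|_{L^{p^*}_x}$ with $1/p+1/p^*=1/2$ together with \eqref{Bern-est} on the inner $\mathcal R\sqrt K v_{\lambda_2}$ to produce a further factor $\angles{\lambda_2}^{-1/2}$. The remaining pointwise-in-time product $\|u_{\lambda_1}\|_{L^p_x}\|v_{\lambda_2}\|_{L^{p^*}_x}$ is then integrated via an asymmetric Hölder split in time, chosen so that Bernstein \eqref{Bern-est0} acts on the \emph{lower} of the two frequencies. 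In the case $\lambda_1\le\lambda_2$ one takes $p=\infty$, $p^*=2$, $q_1=q$, $q_2=q'$ with $q'=q/(q-1)$, and uses
\[
\|u_{\lambda_1}\|_{L^q_T L^\infty_x}\lesssim \lambda_1^{d/2-2/q}\|u_{\lambda_1}\|_{L^q_T L^r_x}\lesssim \lambda_1^{d/2-2/q}\angles{\lambda_1}^{3/(2q)}\|u\|_{X_{\lambda_1}},
\]
combined with $\|v_{\lambda_2}\|_{L^{q'}_T L^2_x}\le T^{1-1/q}\|v\|_{X_{\lambda_2}}$. In the case $\lambda_1>\lambda_2$ the roles of $u$ and $v$ are swapped, giving the analogous expression with $\lambda_2$ in place of $\lambda_1$.

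The proof concludes by multiplying through by the outer factor and verifying the resulting inequality, using the Littlewood–Paley trichotomy $\lambda_0\lesssim\max(\lambda_1,\lambda_2)$ and a case analysis on whether each $\lambda_i$ is small or large compared to $1$. The main obstacle lies precisely in this book-keeping: one must verify that the loss $\angles{\cdot}^{3/(2q)}$ from the weighted Strichartz norm is always balanced by the Bernstein gain $\lambda^{d/2-2/q}$ at the low frequency together with the smoothing $\angles{\cdot}^{-1/2}$ from $\mathcal R\sqrt K$. The delicate sub-cases are those with $\min(\lambda_1,\lambda_2)\ll 1$, where one invokes $d/2-2/q>0$ (valid since $d\ge 2$ and $q>2$) to absorb the resulting positive powers of the small frequency. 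The second estimate \eqref{keybiles12} follows by the same procedure, the only change being that the outer-operator factor becomes $\lambda_0\angles{\lambda_0}^{-1/2}\angles{\lambda_1}^{-1/2}\angles{\lambda_2}^{-1/2}$ in place of $\lambda_0\angles{\lambda_0}^{-1}\angles{\lambda_2}^{-1/2}$, reflecting $\sqrt K$ rather than $K$ in the outer operator and the symmetric appearance of $\mathcal R\sqrt K$ on both factors.
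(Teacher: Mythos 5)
Your proposal follows essentially the same route as the paper's proof: extract $|D|K$ and $\mathcal R\sqrt K$ as scalar factors via \eqref{Bern-est}, put the low-frequency factor in $L^q_T L^\infty_x$ (upgraded from $L^q_T L^r_x$ by \eqref{Bern-est0} with $d/r=d/2-2/q$) and the high-frequency factor in $L^\infty_T L^2_x$, and H\"older in time to produce the $T^{1-1/q}$ factor. The only cosmetic difference is bookkeeping of the time exponents (you split $L^1_T = L^q_T\cdot L^{q'}_T$ directly, the paper passes through $L^2_{T,x}$ first), and your observation that $d/2 - 2/q>0$ is needed to control small $\lambda_{\min}$ matches the step in the paper where $\lambda_1^{d/2-2/q}\angles{\lambda_1}^{3/(2q)}\lesssim\angles{\lambda_1}^{d/2-1/(2q)}$ is used.
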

\begin{proof} 

We only prove \eqref{keybiles11} since the proof for \eqref{keybiles12} is similar.
To prove \eqref{keybiles11} ,
by symmetry, we may assume $\lambda_1 \lesssim \lambda_2$. 
Let
$$
q= 2^+,  \qquad  r = \frac {2qd}{qd-4},  \qquad  d\ge 2 
$$
Then by H\"{o}lder,  \eqref{Bern-est},  \eqref{Bern-est0} and \eqref{Xlam-est} we obtain
\begin{align*}
\text{LHS} \ \eqref{keybiles11} & \lesssim T^\frac12 \lambda_0\angles{ \lambda_0}^{-1} \norm{   \mathcal R \cdot( u_{\lambda_1} \mathcal R \sqrt  K v_{\lambda_2} ) }_{L_{T,x}^2}  
\\
&\lesssim  T^{\frac12}  
 \angles{ \lambda_2}^{-\frac12}   \norm{u_{\lambda_1}}_{ L_T^2 L_x^\infty } \norm{  v_{\lambda_2}}_{ L_T^\infty L_x^2 }  
 \\
&\lesssim  T^{1-\frac1q}    
 \angles{ \lambda_2}^{-\frac12}  \lambda_1^{\frac d2-\frac2q} \norm{u_{\lambda_1}}_{ L_T^q L_x^r } \norm{  v_{\lambda_2}}_{ L_T^\infty L_x^2 }  
\\
&\lesssim   T^{1-\frac1q}    
 \angles{ \lambda_2}^{-\frac12}  \lambda_1^{\frac d2-\frac2q}  \angles{ \lambda_1}^{\frac 3{2q}} 
   \norm{u}_{X_{\lambda_1} }
  \norm{ v }_{X_{\lambda_2}}
  \\
&\lesssim   T^\frac12  
 \angles{ \lambda_2}^{-\frac12}    \angles{ \lambda_1}^{\frac d2 -\frac 1{2q}} 
   \norm{u}_{X_{\lambda_1} }
  \norm{ v }_{X_{\lambda_2}}
\end{align*}
which proves \eqref{keybiles11}.
\end{proof}

Now we are ready to prove Lemma  \ref{lm-keybiest}. To this end, we decompose 
 $ u=\sum_{\lambda} u_{\lambda}$ and $v=\sum_{\lambda} v_{\lambda}.$
Note that by denoting
$$
a_{\lambda}:=  \norm{u }_{  X_{\lambda} }, \quad  b_{\lambda}:=  \norm{v}_{  X_{\lambda }} $$
we can write
\begin{equation}\label{Xs-redf}
\norm{u}_{  X^s_T }= \norm{ \left(  \angles{\lambda}^{s}  a_{\lambda } \right)}_{l^2_{\lambda}}, \quad
\norm{v}_{  X^s_T }= \norm{ \left(  \angles{\lambda}^{s}  b_{\lambda } \right)}_{l^2_{\lambda}}.
\end{equation}

We shall make a frequent use of of the following dyadic summation estimate, for $\mu, \lambda \in 2^\Z$ and $ c_1, c_2,  p>0$:
\begin{equation*}
\sum_{\mu \sim \lambda } a_\mu \sim a_\lambda, \qquad 
\sum_{c_1 \lesssim \lambda \lesssim c_2} \lambda^p \lesssim 
\begin{cases}
& c_2^p  \quad \text{if} \ p>0,
\\
& c_1^p  \quad \text{if} \ p<0.
\end{cases}
\end{equation*} 

\subsection{Proof of \eqref{Keybiest1} }
Applying \eqref{Strest1d0-inh} to $\mathcal B_1^\pm (u,v)$ in \eqref{AB} we get  
  \begin{align*}
\norm{   \mathcal B_1^\pm  (u,v)  }_{X^s_T} ^2 
& \lesssim \sum_{\lambda_0}  \angles{\lambda_0}^{2s} \norm{   |D| K    \mathcal R \cdot P_{\lambda_0}  \left( u  \mathcal R \sqrt{K} v \right) }^2_{ L_T^1L_x^{2} } ,
 \end{align*}
 where \eqref{Xlam-est} is also used.
 By the dyadic decomposition 
 \begin{equation}\label{Iloc}
 \norm{   |D| K    \mathcal R \cdot P_{\lambda_0}  \left( u  \mathcal R \sqrt{K} v \right) }_{  L_T^1L_x^{2}  } 
 \lesssim \sum_{\lambda_1, \lambda_2}   \norm{   |D| K   \mathcal R \cdot P_{\lambda_0}  \left( u_{\lambda_1}  \mathcal R \sqrt{K} v_{\lambda_2}  \right) }_{ L_T^1L_x^{2}   } .
 \end{equation}
Now let $\lambda_\text{min}  $, $\lambda_\text{med}$  and $ \lambda_{\max}$ denote the minimum, median and the maximum of $\{\lambda_0, \lambda_1, \lambda_2\}$, respectively.
 By checking the support properties in Fourier space of the bilinear term on the right hand side of \eqref{Iloc} one can see that this term vanishes unless $\bm \lambda=(\lambda_0, \lambda_1, \lambda_2) \in   \Lambda,$
where
\begin{equation*}
  \Lambda= \{\bm \lambda : \  \lambda_\text{med} \sim \lambda_{\max}  \}.
  \end{equation*} 
Thus, we have a non-trivial contribution in \eqref{Iloc} only if $\bm \lambda \in \cup_{j=0}^2  \Lambda_j,$ where
\begin{align*}
\Lambda_0 &= \{\bm \lambda : \ \lambda_0 \lesssim \lambda_1\sim  \lambda_2 \}, 
\\ 
\Lambda_1&= \{\bm \lambda : \ \lambda_2 \ll  \lambda_1\sim  \lambda_0 \}, 
\\ 
\Lambda_2 &= \{\bm \lambda : \ \lambda_1 \ll \lambda_2 \sim  \lambda_0 \}.
\end{align*}
 By using these facts, and applying \eqref{keybiles11} to the right hand side of \eqref{Iloc}, we get 
 \begin{align*}
\norm{   \mathcal B^\pm_1 (u,v)  }_{X^s_T} ^2 
& \lesssim T^{2-\frac2q}  \sum_{j=0}^2   \mathcal I_j
 \end{align*}
 where 
\begin{equation}
\label{Gammadefj}
  \mathcal I_j= \sum_{\lambda_0}  \angles{\lambda_0}^{2s} \left[  \sum_{\lambda_1, \lambda_2: \ \bm \lambda \in   \Lambda_j }   \frac{  \min\left( \angles{ \lambda_1},  \angles{ \lambda_2} \right)^{\frac d{2}-\frac1{2q} }}{ \angles{ \lambda_2} ^{\frac12} } a_{\lambda_1 } b_{\lambda_2 }    \right]^2
\end{equation} 
So \eqref{Keybiest1} reduces to proving 
\begin{equation}
\label{GammaEstj}
 \mathcal I_j \lesssim \norm{u}^2_{  X^s_T }\norm{v}^2_{ X^s_T }  \qquad \text{if} \ \  s>\frac d2 - \frac12 - \frac 1{2q} 
\end{equation} 
for $j=0,1,2$. 

These are shown to hold as follows:
\begin{align*}
 \mathcal I_0
&
\lesssim  \sum_{ \lambda_0 } \angles{\lambda_0}^{2s} \left( \sum_{ \lambda_1\sim \lambda_2\gtrsim \lambda_0  }  
 a_{\lambda_1 }  \cdot \angles{\lambda_2}^{\frac d 2-\frac12-\frac1{2q}} b_{\lambda_2 } \right)^2
 \\
 &
\lesssim  \sum_{ \lambda_0 } \angles{\lambda_0}^{-2\left( s- \frac d2 + \frac12 +\frac1{2q}\right)} \left( \sum_{ \lambda_1\sim \lambda_2\gtrsim \lambda_0  }  
\angles{\lambda_1}^{s}  a_{\lambda_1 }  \cdot \angles{\lambda_2}^{s} b_{\lambda_2 } \right)^2
\\
&
\lesssim   \norm{u}^2_{  X^s_T }\norm{v}^2_{ X^s_T },
\end{align*}
where to obtain the last line we used Cauchy Schwarz inequality in $\lambda_1\sim \lambda_2$ and \eqref{Xs-redf}.

Similarly,
\begin{align*}
\mathcal I_1
&
\lesssim \sum_{ \lambda_0 }   \angles{\lambda_0}^{2s} \left( \sum_{ \lambda_2 \ll \lambda_1 \sim \lambda_0}  
a_{\lambda_1 } \cdot  \angles{\lambda_2}^{\frac d 2-\frac12-\frac1{2q}}  b_{\lambda_2 } \right)^2
\\
& \lesssim   \sum_{ \lambda_0 }   \angles{\lambda_0}^{2s} a_{\lambda_0}^2 \left( \sum_{ \lambda_2  } \angles{\lambda_2}^{\frac d 2-\frac12-\frac1{2q}}  b_{\lambda_2 } \right)^2
\\
&
\lesssim \norm{u}^2_{  X^s_T }\norm{v}^2_{ X^s_T },
\end{align*}
where to get the last two inequalities we used 
$\sum_{\lambda_1 \sim \lambda_0} a_{\lambda_1} \sim a_{\lambda_0} $ and by Cauchy Schwarz
\begin{align*}
 \sum_{ \lambda_2  }    \angles{\lambda_2}^{\frac d 2-\frac12-\frac1{2q}} b_{\lambda_2 } &= \sum_{ \lambda_2  }   \angles{\lambda_2}^{\frac d 2-\frac12-\frac1{2q} -s}  \cdot \angles{  \lambda_2}^{s} b_{\lambda_2 }  \\
  &\lesssim  \ \norm{ \left(  \angles{\lambda_2}^{s}  b_{\lambda_2 } \right)}_{l^2_{\lambda_2}} \lesssim \norm{v}_{ X^s_T }.
\end{align*}

Finally, 
\begin{align*}
\mathcal I_2
&
\lesssim \sum_{ \lambda_0 }   \angles{\lambda_0}^{2s} \left( \sum_{ \lambda_1 \ll \lambda_2 \sim \lambda_0}  \angles{  \lambda_1}^{\frac d 2-\frac12-\frac1{2q}} 
a_{\lambda_1 } \cdot  b_{\lambda_2 } \right)^2
\\
& \lesssim   \sum_{ \lambda_0 }   \angles{\lambda_0}^{2s} b_{\lambda_0}^2 \left( \sum_{ \lambda_1  }  
   \angles{  \lambda_1}^{\frac d 2-\frac12-\frac1{2q}}  a_{\lambda_1 } \right)^2
\\
&
\lesssim \norm{u}^2_{  X^s_T }\norm{v}^2_{ X^s_T }.
\end{align*}

\subsection{Proof of \eqref{Keybiest2} }
Arguing as in the preseding subsection we apply \eqref{Strest1d0-inh} to $\mathcal B_2^\pm (u,v)$ in \eqref{AB}  and then use \eqref{keybiles12} to obtain
 \begin{align*}
\norm{   \mathcal B^\pm_2 (u,v)  }_{X^s_T} ^2 
& \lesssim T^{2-\frac2q} \sum_{j=0}^2  
 \widetilde {\mathcal I}_j
 \end{align*}
 where 
\begin{equation*}
  \widetilde {\mathcal I}_j= \sum_{\lambda_0}  \angles{\lambda_0}^{2s} \left[  \sum_{\lambda_1, \lambda_2 \in \Lambda_j}   C(\lambda_0, \lambda_1, \lambda_2) a_{\lambda_1 } b_{\lambda_2 }    \right]^2
 \end{equation*}
 with
 $$
 C(\lambda_0, \lambda_1, \lambda_2)=\frac{   \angles{ \lambda_0} ^{\frac12}
 \min\left( \angles{ \lambda_1},  \angles{ \lambda_2} \right)^{\frac d{2}-\frac1{2q} } }{\angles{ \lambda_1} ^{\frac12} \angles{ \lambda_2}. ^{\frac12}}
 $$
 
 So \eqref{Keybiest2} reduces to proving 
\begin{equation*}
 \widetilde { \mathcal I}_j \lesssim \norm{u}^2_{  X^s_T }\norm{v}^2_{ X^s_T } \quad (j=0,1,2).
\end{equation*} 
These can be proved following the argument of the preceding subsection by using the fact that
\begin{equation*}
  C(\lambda_0, \lambda_1, \lambda_2) \lesssim \min\left (  \angles{  \lambda_1} ,   \angles{  \lambda_2}\right)^{\frac d2-\frac12-\frac 1{2q}}  
\end{equation*} 
for all $\lambda_0, \lambda_1, \lambda_2 \in \Lambda$.

\section{Appendix}

In this appendix, we derive some useful estimates on the derivatives of all order for the function 
$$
m_\beta (r)=\sqrt{ r\left(1+ \beta r^2\right)  \tanh(  r ) },   \qquad  \beta \in \{0, 1\}.
$$
 Estimates for the first and second order derivatives of this function is derived recently in \cite{PSST21}.

Clearly,
\begin{equation}
\label{m-est}
m_\beta(r)  \sim r  \angles{ \sqrt{\beta} r} \angles{  r}^{-1/2}.
\end{equation}

\begin{lemma}\label{lm-mest}
Let  $\beta \in \{0, 1\}$ and $r>0$. Then
\begin{align}
\label{m1-est} 
m_\beta'(r) & \sim \angles{ \sqrt{\beta} r} \angles{ r}^{-1/2},
\\
\label{m2-est} 
|m_\beta''(r)| &\sim  r \angles{ \sqrt{\beta}  r} \angles{ r}^{-5/2}.
\end{align}
Moreover, 
\begin{align}
\label{mj-est} 
|m_\beta^{(k)}(r)|\underset{k}  \lesssim r^{1- k}  \angles{ \sqrt{\beta}  r} \angles{ r}^{-1/2} \qquad ( k \ge 3 ).
\end{align}

\begin{proof}
The estimates \eqref{m1-est} and \eqref{m2-est} are proved in \cite[Lemma 3.2, see its proof in Section 5]{PSST21}. So we only prove \eqref{mj-est}. 

Let
$$
T(r)=\tanh r, \qquad  S(r)=\sech  r.
$$
Then 
\begin{align*}
T'&=S^2, \qquad S'=-T S,
\qquad
T''=-2TS^2.
\end{align*}
In general,  we have
$$
T^{(j)} (r)= S^2 \cdot P_{j-1}( S, T)  \qquad  (j\ge 1)
$$
for some polynomial $P_{j-1}$ of degree $j-1$.

Clearly,
\begin{equation*}
T(r) \sim  r\angles{ r}^{-1} \qquad  \text{and} \qquad S(r) \sim e^{-r}. 
\end{equation*}
So $ |P_{j-1}( S, T) |\lesssim 1$, and hence
\begin{equation}\label{Tj-est}
|T^{(j)}(r)|  \lesssim e^{-2r}   \qquad (j\ge 1).
\end{equation}
 
 Write
 $$
m_\beta(r)=   f_\beta(r)  \cdot T_0(r),
$$
where 
$f_\beta(r)= \sqrt{ r} \angles{ \sqrt{\beta}  r}$ and  $ T_0(r)=\sqrt { T(r) }  .$  
One can show that
\begin{equation}\label{rhoj-est-est}
\Bigabs{f_\beta^{(j)}(r)}  \lesssim  r^{\frac12 -j} \angles{ \sqrt{\beta}  r} \qquad ( j\ge 0).
\end{equation}

Combining \eqref{Tj-est} with $T(r) \sim  r\angles{ r}^{-1} $ we obtain
  \begin{equation}\label{sqrtTj-est}
T_0(r) \sim  r^\frac12 \angles{ r}^{-\frac12} , \qquad \Bigabs{ T^{(j)}_0(r) }  \lesssim  r^{\frac12 -j }  \angles{r}^{j-\frac12 } e^{-2
r}  \qquad ( j\ge 1).
\end{equation}

Finally,  we use \eqref{rhoj-est-est} and  \eqref{sqrtTj-est} 
to obtain for all $k \ge 3$,
\begin{align*}
\Bigabs{m_\beta^{(k)}(r)} &=  \Bigabs{ f_\beta^{(k)}(r)  T_0(r)  +  \sum_{j=1}^k 
\begin{pmatrix}
k \\ j
\end{pmatrix}    f_\beta^{(k-j)}(r)  T^{(j)}_0(r)}
\\
&\lesssim   r^{1 -k} \angles{ \sqrt{\beta}  r} \angles{ r}^{-\frac12}  + \sum_{j=1}^k 
\begin{pmatrix}
k \\ j
\end{pmatrix}    r^{\frac12 -(k-j)} \angles{ \sqrt{\beta}  r} \cdot r^{\frac12 -j }  \angles{r}^{j-\frac12 } e^{-2r}
\\
&\lesssim   r^{1 -k}  \angles{ \sqrt{\beta} r} \angles{ r}^{-\frac12} .
\end{align*}

\end{proof}

\end{lemma}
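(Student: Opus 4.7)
My plan is to prove the higher-order estimate \eqref{mj-est} for $k \ge 3$, since \eqref{m1-est} and \eqref{m2-est} are already available in \cite{PSST21}. The approach is to factor
\[
m_\beta(r) = f_\beta(r)\, T_0(r), \qquad f_\beta(r) := \sqrt{r}\,\angles{\sqrt{\beta}\, r}, \qquad T_0(r) := \sqrt{\tanh r},
\]
so that $f_\beta$ carries the branch singularity at the origin together with the polynomial growth measured by $\angles{\sqrt{\beta}\, r}$, while $T_0$ absorbs the transcendental hyperbolic part, which is essentially bounded and exponentially close to a constant at infinity. With this splitting, a Leibniz expansion reduces the problem to derivative estimates on each factor separately.

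For the algebraic factor I would verify by elementary calculation that
\[
|f_\beta^{(j)}(r)| \lesssim r^{1/2 - j}\,\angles{\sqrt{\beta}\, r} \qquad (j \ge 0),
\]
since differentiating $\sqrt{r}$ produces the declared power and the smooth factor $\angles{\sqrt{\beta}\, r}$ is harmless (either identically $1$ when $\beta = 0$ or of growth $\angles{r}$ when $\beta = 1$). For the hyperbolic factor, using the identities $(\tanh r)' = \sech^2 r$ and $(\sech r)' = -\tanh r \cdot \sech r$, one shows by induction that every derivative $(\tanh r)^{(j)}$ equals $\sech^2 r$ times a polynomial of degree $j-1$ in $\tanh r$ and $\sech r$. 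Since $|\tanh r|, |\sech r| \le 1$ and $\sech r \sim e^{-r}$ for large $r$, this yields $|(\tanh r)^{(j)}| \lesssim e^{-2r}$ for $j \ge 1$. Combining this with $\tanh r \sim r\,\angles{r}^{-1}$ and a Fa\`a di Bruno expansion for $T_0 = (\tanh r)^{1/2}$, one obtains
\[
T_0(r) \sim r^{1/2}\,\angles{r}^{-1/2}, \qquad |T_0^{(j)}(r)| \lesssim r^{1/2 - j}\,\angles{r}^{j - 1/2}\, e^{-2r} \quad (j \ge 1),
\]
where the $r$-power records the branch behaviour at the origin and the exponential factor dominates any polynomial growth at infinity.

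The final step is the Leibniz expansion
\[
m_\beta^{(k)}(r) = \sum_{j=0}^{k} \binom{k}{j}\, f_\beta^{(k-j)}(r)\, T_0^{(j)}(r).
\]
The leading $j = 0$ contribution gives $f_\beta^{(k)}(r)\, T_0(r) \lesssim r^{1-k}\,\angles{\sqrt{\beta}\, r}\,\angles{r}^{-1/2}$, which is exactly \eqref{mj-est}. For $j \ge 1$ the total $r$-power of each summand is again $r^{1-k}$ (the distribution of the $k$ derivatives across the two factors is irrelevant for the total exponent), and the remaining polynomial factor $\angles{r}^{j - 1/2}$ is easily defeated by $e^{-2r}$, so these tail terms are absorbed into the $j = 0$ bound on the whole of $(0, \infty)$. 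The main obstacle I foresee is purely the bookkeeping of half-integer powers of $r$ through the Leibniz sum and the Fa\`a di Bruno formula for a square root; the estimate is sharp at the origin, so there is no room to be wasteful near $r = 0$, whereas at infinity the exponential decay in $T_0^{(j)}$ leaves a very comfortable margin.
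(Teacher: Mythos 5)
Your proposal is correct and follows essentially the same route as the paper's proof: the same factorization $m_\beta = f_\beta \cdot T_0$, the same observation that $T^{(j)} = \sech^2 r \cdot P_{j-1}(\sech r, \tanh r)$ giving exponential decay of $T^{(j)}$ for $j\ge 1$, the same bounds on $f_\beta^{(j)}$ and $T_0^{(j)}$, and the same Leibniz expansion with the $j=0$ term dominating. The only cosmetic difference is that you invoke Fa\`a di Bruno by name for $T_0=(\tanh r)^{1/2}$, whereas the paper leaves that chain-rule computation implicit.
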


\begin{corollary}\label{cor-m}
For $\lambda, r>0$, define
 $m_{\beta, \lambda}(r)= m_\beta(\lambda r)$.  Then 
\begin{equation}\label{m-invest}
\max_{ r\sim 1}\Bigabs {\partial_r ^k \left(  \frac 1{  m'_{\beta, \lambda}(r)} \right)} \underset{k}  \lesssim \lambda^{-1}   \angles{ \sqrt{\beta}  \lambda}^{-1}\angles{\lambda}^{\frac12}   \qquad   (k \ge 0).
\end{equation} 
\begin{proof}
Observe that
$$
m^{(k)}_{\beta, \lambda}(r) = \lambda^k m_\beta^{(k)}(\lambda r)   \qquad (k\ge 1).
$$
By \eqref{m1-est} we have, for $r\sim 1$,
$$
| m'_{\beta, \lambda}(r) |\sim  \lambda  \angles{\sqrt{\beta}  \lambda} \angles{\lambda}^{-\frac12}
$$
and by \eqref{m2-est}-- \eqref{mj-est} 
$$
|m^{(k)}_{\beta, \lambda}(r)   | \underset{k} \lesssim   \lambda  \angles{\sqrt{\beta}  \lambda} \angles{\lambda}^{-\frac12} \qquad (k\ge 2).
$$
Finally, one can combine 
these two estimates with the differentiation formula
\begin{align*}
\partial_r^k \left( \frac 1f \right) =\sum_{p=1}^k \sum_{ \substack{k_1, \cdots, k_p \in \N \\  k_1+ \cdots + k_p =k  }} c_{p, k_1, \cdots, k_p } \frac{ \partial_r^{k_1} f \cdots \partial_r^{k_p} f  }{f^{p+1}}
\end{align*}
to obtain the desired estimate \eqref{m-invest}.

\end{proof}

\end{corollary}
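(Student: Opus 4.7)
The plan is to reduce to Lemma \ref{lm-mest} via the chain rule and then unfold the derivatives of $1/m'_{\beta,\lambda}$ using the Fa\`a di Bruno formula. First I would use the identity $m^{(k)}_{\beta,\lambda}(r) = \lambda^k m^{(k)}_\beta(\lambda r)$ together with $r \sim 1$ (so $\lambda r \sim \lambda$) to establish the uniform envelope
\begin{equation*}
|m'_{\beta,\lambda}(r)| \sim \lambda \langle \sqrt{\beta}\lambda\rangle \langle \lambda\rangle^{-1/2}, \qquad |m^{(k)}_{\beta,\lambda}(r)| \lesssim_k \lambda \langle \sqrt{\beta}\lambda\rangle \langle \lambda\rangle^{-1/2} \quad (k \ge 2).
\end{equation*}
The first equivalence is \eqref{m1-est}; the $k=2$ case follows from \eqref{m2-est} after noting $\lambda^2 \langle \lambda\rangle^{-2} \lesssim 1$; and for $k \ge 3$ the factor $(\lambda r)^{1-k}$ in \eqref{mj-est} cancels the chain-rule factor $\lambda^k$ (using $r^{1-k} \sim 1$). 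The $k = 0$ case of \eqref{m-invest} is then immediate from the lower bound on $|m'_{\beta,\lambda}|$.

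For $k \ge 1$ I would apply Fa\`a di Bruno's formula to the composition $g(m'_{\beta,\lambda}(r))$ with $g(x) = 1/x$, so that $g^{(n)}(x) = (-1)^n n!\, x^{-(n+1)}$. Each term in the resulting finite sum has the form
\begin{equation*}
C_{m_1,\ldots,m_k}\, \bigl(m'_{\beta,\lambda}(r)\bigr)^{-(n+1)} \prod_{j=1}^k \bigl(m^{(j+1)}_{\beta,\lambda}(r)\bigr)^{m_j}, \quad n := \sum_j m_j, \ \ \sum_j j m_j = k,
\end{equation*}
and the uniform envelope from the previous step controls it by
\begin{equation*}
\bigl(\lambda \langle \sqrt{\beta}\lambda\rangle \langle \lambda\rangle^{-1/2}\bigr)^{-(n+1)+n} = \lambda^{-1} \langle \sqrt{\beta}\lambda\rangle^{-1} \langle \lambda\rangle^{1/2},
\end{equation*}
which is precisely the bound in \eqref{m-invest}. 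Summing over the finitely many partitions of $k$ then finishes the argument.

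The structural point that makes the estimate collapse cleanly is that the number of numerator factors ($n$) and the power of the denominator ($n+1$) always differ by exactly one, so the $\lambda$-dependent scales telescope independently of which partition one looks at. The only substantive verification is the preliminary claim that \emph{every} order of derivative of $m'_{\beta,\lambda}$ at $r \sim 1$ lies in the common envelope $\lambda \langle \sqrt{\beta}\lambda\rangle \langle \lambda\rangle^{-1/2}$---in particular, that the apparently sharper estimate \eqref{m2-est} still fits---which is an elementary one-line check. Beyond Lemma \ref{lm-mest} there is no real obstacle.
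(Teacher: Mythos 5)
Your proposal is correct and takes essentially the same approach as the paper: the paper establishes the same two ingredient bounds, the lower bound $|m'_{\beta,\lambda}(r)| \sim \lambda\angles{\sqrt{\beta}\lambda}\angles{\lambda}^{-1/2}$ and the upper bound $|m^{(k)}_{\beta,\lambda}(r)| \lesssim_k \lambda\angles{\sqrt{\beta}\lambda}\angles{\lambda}^{-1/2}$ for $k\ge 2$, and then simply asserts the conclusion, leaving the Fa\`a di Bruno bookkeeping implicit. Your write-up makes that final step explicit (and correctly notes that each partition contributes the common envelope to the power $-1$), but it is the same argument.
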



\noindent \textbf{Acknowledgments}
The authors would like to thank the anonymous referee for useful comments on an earlier version of this article.




\begin{thebibliography}{9}

\bibitem{Cr18}
J. D. Carter, \emph{{Bi-directional Whitham equations as models of waves on
  shallow water}}, Wave Motion \textbf{82} (2018), 51--61. 
  
  \bibitem{DDT22}
T. Deneke, T. T. Dufera and A. Tesfahun: \emph{Comparison between Boussinesq and Whitham-Boussinesq type systems}.
Mathematical Methods in the Applied Sciences (2022): \url{https://doi.org/10.1002/mma.8304}.


\bibitem{Dy19}
E. Dinvay, \emph{{On well-posedness of a dispersive system of the
  {W}hitham-{B}oussinesq type}}, Appl. Math. Lett. \textbf{88} (2019), 13--20.


\bibitem{DDK19}
E.  Dinvay, D. Dutykh, and H. Kalisch, \emph{{A comparative study of
  bi-directional {W}hitham systems}}, Appl. Numer. Math. \textbf{141} (2019),
  248--262. 



\bibitem{DST20}  E. Dinvay, S. Selberg and A.Tesfahun
\emph{
Well-posedness for a dispersive system of the Whitham-Boussinesq type}. 
SIAM J. Math.Anal. 52 \textbf{(3)}, pp. 2353--2382


\bibitem{G08}
L. Grafakos, \emph{{Classical {F}ourier analysis}}, third ed., {Graduate
  Texts in Mathematics}, vol. 249, Springer, New York, 2014. \MR{3243734}
  
\bibitem{J81}
F. John, \emph{Plane Waves and Spherical Means, Applied to Partial Differential Equations}, Springer, 1981.



\bibitem{KLPS18} \textsc{C. Klein, F. Linares,  D.Pilod and J.-C. Saut},  {\it On Whitham and related equations},  Studies in Appl. Math.  {\bf 140} (2018),  pp. 133--177



\bibitem{DMDK2017} 
E. Dinvay, D. Moldabayev, D. Dutykh and H. Kalisch, \emph{The Whitham equation with surface tension}, Nonlinear Dyn (2017) 88:1125--1138.

\bibitem{D2013} 
D. Lannes, \emph{Water waves : mathematical theory and asymptotics, Mathematical Surveys and Monographs}, vol 188 (2013), AMS, Providence.


\bibitem{PSST21}  D. Pilod, J-C Saut, S. Selberg and A. Tesfahun 
\emph{
Dispersive Estimates for Full Dispersion KP Equations}. 
J. Math. Fluid Mech. (2021) 23:25, \url{https://doi.org/10.1007/s00021-021-00557-3}.


\bibitem{RK2017} 
F. Remonato and H. Kalisch, \emph{Numerical bifurcation for the capillary Whitham equation}, Physica D, \textbf{343} (2017), 51--62.


\bibitem{S93}
Elias~M. Stein, \emph{{Harmonic analysis: real-variable methods, orthogonality,
  and oscillatory integrals}}, {Princeton Mathematical Series}, vol.~43,
  Princeton University Press, Princeton, NJ, 1993.

\bibitem{T2020}
A. Tesfahun, \emph{Long-time existence for Whitham-Boussinesq system in two dimensions}. Preprint:
\url{https://arxiv.org/abs/2201.03628}.

  \bibitem{Tao} 
T. Tao, \emph{Nonlinear Dispersive Equations}, CBMS Reg. Conf. Ser. Math. 106, AMS, Providence,
RI, 2006.
\end{thebibliography}
\end{document}